\documentclass[11pt,a4paper,twoside]{article}
\usepackage{amsthm, amsfonts,amsmath}

\topmargin=-18 true mm
\oddsidemargin=-4 true mm
\evensidemargin=-4 true mm
\setlength{\textheight}{252 true mm}  
\setlength{\textwidth}{162 true mm}

\newtheorem{theorem}{Theorem}[section]
\newtheorem{corollary}{Corollary}[section]
\newtheorem{definition}{Definition}[section]
\newtheorem{example}{Example}[section]
\newtheorem{lemma}{Lemma}[section]
\newtheorem{proposition}{Proposition}[section]
\newtheorem{remark}{Remark}[section]

\newcommand\tr{\operatorname{trace}}
\def\Ric{\operatorname{Ric}}
\def\eq{\hspace*{-2.5mm}&=&\hspace*{-2.5mm}}
\def\vol{\operatorname{vol}}
\newcommand\Div{\operatorname{div}}
\def\calf{{\cal F}}

\author{Vladimir Rovenski\footnote{Department of Mathematics, University of Haifa,
 Israel
       \newline e-mail: {\tt vrovenski@univ.haifa.ac.il}
       } }

\title{On isometric immersions of sub-Riemannian manifolds}

\begin{document}

\date{}

\maketitle

\begin{abstract}
We study curvature invariants of a sub-Riemannian manifold
(i.e., a manifold with a Riemannian metric on a non-holonomic distribution) related to mutual curvature of several pairwise orthogonal subspaces of the distribution, and prove geometrical inequalities for a sub-Riemannian submanifold.
As applications, inequalities are proved for submanifolds with mutually orthogonal distributions that include scalar and mutual curvature.
For compact submanifolds, inequalities are obtained that are supported by known integral formulas for almost-product manifolds.

\vskip1.5mm\noindent
\textbf{Keywords}:
Sub-Riemannian manifold,
isometric immersion,
mutual curvature,
mean curvature

\vskip1.5mm
\noindent
\textbf{Mathematics Subject Classifications (2010)} 53C12; 53C15; 53C42
\end{abstract}




\section{Introduction}
\label{sec:00}

Extrinsic geometry of Riemannian submanifolds deals with properties that can be expressed
in terms of the second fundamental form and its invariants (e.g., principal curvatures).
The recent development of the
geometry of submanifolds
was inspired by the embedding theorem of J.F.\, Nash, \cite{na-1},
and theorems that surfaces with positive curvature are easily embedded in 3D space (A.D. Aleksandrov and A.V.~Pogorelov), while surfaces with negative curvature usually do not allow such an embedding (D. Gilbert and N.V.~Efimov).
This led to the following
problem (see \cite[Problem~2]{chen2}):
\textit{find a simple optimal connection between intrinsic and extrinsic invariants of a Riemannian submanifold}.
The~difficulty was to understand smooth submanifolds (the problem is different for $C^1$-immersions, \cite{na-2}) of large codimension
using only a few known relationships (fundamental Gauss-Codazzi-Ricci equations) between intrinsic and extrinsic geometry.
In 1968, S.S.\,Chern posed a question on other obstacles for a Riemannian manifold to admit an isometric minimal immersion in a Euclidean space. To study these questions, it is necessary to introduce new types of Riemannian invariants, and to find optimal relations between them and extrinsic invariants of submanifolds.

In 1990s,  B.Y.~Chen introduced the concept of $\delta$-curvature invariants for a Riemannian manifold
and proved the optimal inequality for a submanifold that involves these invariants and the square of mean curvature,
e.g.,~\cite{chen1}, the equality case led to the notion of ``ideal immersions" (isometric immersions of least possible tension).
The $\delta$-invariants are obtained from the scalar curvature
(which is the ``sum" of sectional curvatures)
by discarding some of sectional curvatures.
Similar scalar invariants are known for K\"{a}hler, contact and affine manifolds, warped products and submersions, see \cite{chen1,chen-b}.
For manifolds endowed with nonholonomic distributions or foliations, such curvature invariants have hardly been studied.

Distributions on a manifold, i.e., sub-bundles of the tangent bundle, arise in differential geometry
in terms of line fields, submersions, Lie groups actions, and almost product manifolds.
A nonholonomic manifold, i.e., a pair $(M,{\cal D})$, where ${\cal D}$ is a distribution on a smooth manifold $M$,
was introduced for the geometric interpretation of constrained systems in
classical mechanics and thermodynamics.
A~{sub-Riemannian manifold}, that is $(M,{\cal D})$ equipped with a Riemannian metric $g$ on ${\cal D}$, is
a certain type of generalization of a Riemannian manifold.
There are several lines of research in sub-Riemannian geometry based on optimal control methods, partial differential equations and constrains of other geometries, see~\cite{bf,CC-2009}.

In \cite{r-w-2022}, we introduced curvature invariants (different from $\delta$-invariants by Chen)
of a Riemannian mani\-fold equipped with complementary orthogonal distributions,
and proved the geometric inequality for submanifolds that includes our curvature invariants and the square of mean curvature.
These curvature invariants are related with the mixed scalar curvature -- a well-known curvature invariant of a
Riemannian almost $k$-product manifold, in particular, (multiply) twisted or warped products, e.g., \cite{Rov-Wa-2021}.
In \cite{r1-2022} we introduced invariants of a Riemannian manifold more general than in \cite{r-w-2022}, related to the mutual curvature of noncomplementary pairwise orthogonal subspaces of the tangent bundle.
In the case of one-dimensional subspaces, the mutual curvature is equal to half the scalar curvature of the subspace spanned by them,
and in the case of complementary subspaces, this is the mixed scalar curvature.
Using these invariants, we proved
inequalities for Riemannian submanifolds
and gave applications for sub-Riemannian submanifolds.

In this article, we study curvature invariants (defined in \cite{r-w-2022,r1-2022})
and also introduce Chen-type invariants for a sub-Riemannian manifold.
We~prove geometrical inequalities for submanifolds with mutually orthogonal distributions
that include scalar and mutual curvature.
In the case of
compact submanifolds, we obtain the inequalities supported by known integral formulas for almost-product manifolds.


The article is organized as follows.
In Section~\ref{sec:01a} (following the introductory Section~\ref{sec:00}),
we recall some integral formulas containing scalar and mutual curvature for a sub-Riemannian manifold.
In Section~\ref{sec:01b}, we introduce and study scalar invariants based on this kind of curvature.
In Section~\ref{sec:02}, we prove geometric inequalities for a sub-Riemannian submanifold equipped with distributions.

\section{The mutual curvature of distributions}
\label{sec:01a}

Here, we recall definitions of mutual curvature and mixed scalar curvature of distributions on a sub-Riemannian manifold
and briefly discuss equalities with them and divergence of some vector fields, which lead to integral formulas on a compact manifold.

Let an $\,n$-dimensional Riemannian manifold $(M,g)$ with the Levi-Civita connection $\nabla$
be endowed with a  $d$-dimensional distribution ${\cal D}$ (subbundle of the tangent bundle $TM$ of rank~$d$).
The Riemannian curvature tensor is given by $R_{X,Y} = [\nabla_{X}, \nabla_{Y}] -\nabla_{[X,\, Y]}$,
its contraction is the Ricci tensor $\Ric_{X,Y}=\tr( Z\mapsto R_{Z,X}\,Y )$,
and the trace of Ricci tensor is the~scalar curvature $\tau=\tr_g\Ric$, e.g., \cite{pet}.

Let ${\cal D}^\bot$ be the orthogonal complement to ${\cal D}$ in $TM$, its rank is $d^\bot=n-d$.
We call $(M,g,{\cal D},{\cal D}^\bot)$ a Riemannian almost product manifold, see \cite{g1967}.
The second fundamental form $h$ and integrability tensor $T$
of ${\cal D}$ (and, similarly, tensors $h^\bot$ and $T^\bot$ of ${\cal D}^\bot$) are defined as follows:
\begin{eqnarray*}
 h(X,Y)=\frac12\,(\nabla_X Y+\nabla_Y X)^\perp,\quad
 T(X,Y)=\frac12\,(\nabla_X Y-\nabla_Y X)^\perp.
\end{eqnarray*}
If ${\cal D}$ is integrable (i.e., $T=0$), then it is tangent to a foliation.
Denote by $H=\tr_g h$ and $H^\perp=\tr h^\bot$ the mean curvature vectors of ${\cal D}$ and ${\cal D}^\bot$, respectively.
We call ${\cal D}$ \textit{totally geodesic} if $h=0$,
\textit{harmonic} if $H=0$ and \textit{totally umbilical} if $h = (H/d)\,g$ (and similarly, for ${\cal D}^\bot$).

Let $\{e_i\}$ be an adapted local orthonormal frame, i.e.,
 $\{e_1,\ldots, e_{d}\}\subset{{\cal D}}$ and $\{e_{d+1},\ldots, e_{n}\}\subset{\cal D}^\bot$.
The~mixed scalar curvature ${\rm S}_{\,\rm mix}({\cal D},{\cal D}^\bot)$ is a function on $M$ defined by
\[
 {\rm S}_{\,\rm mix}({\cal D},{\cal D}^\bot) = \sum\nolimits_{\,1\le a\le d,\ d<b\le n} K(e_a\wedge {e}_b),
\]
where $K(e_a\wedge\,{e}_b)=g(R_{e_a,{e}_b}\,e_b, {e}_a)$ is the sectional curvature of the plane $e_a\wedge\,{e}_b$,
and it does not depend on the choice of~frames.
For example, if ${\cal D}$ (or ${\cal D}^\bot$) is one-dimensional and locally spanned by a unit vector field $N$, then
$S_{\rm mix}({\cal D},{\cal D}^\bot)=\Ric_{N,N}$.
The following formula for complementary orthogonal distributions ${\cal D}$ and ${\cal D}^\bot$
on a Riemannian manifold $(M,g)$ was proved in~\cite{wa1}:
\begin{equation}\label{eq-wal2}
 \Div(H+H^\bot) =
 S_{\rm mix}({\cal D},{\cal D}^\bot)+\|\,h\,\|^2+\|\,h^\perp\,\|^2-\|\,H\,\|^2-\|\,H^\perp\,\|^2-\|\,T\,\|^2-\|\,T^\perp\,\|^2 .
\end{equation}

\begin{example}\rm
Let ${\cal D}$ be tangent to a codimension one foliation $\calf$, and $N$ be a unit normal to the leaves of $\calf$.
The~shape
operator $A_N:T\calf\to T\calf$
is given~by $A_N(X)=-\nabla_X\,N$, where $\nabla$ is the Levi--Civita connection. The~{generalized mean curvatures} 
 $\sigma_r=\sigma_r(A_N)$ are functions on $M$ defined as coefficients of the $n$-th degree polynomial $\det({\rm id}_{\cal D} + \,t A_N)$ in~$t$.
Thus, $\sigma_0=1,\ \sigma_1=\tr A_N, \ \ldots, \ \sigma_n=\det A_N$.
In this case, \eqref{eq-wal2} reduces to
\begin{equation}\label{eq-wal2b}
 \Div(\nabla_N\,N+\sigma_1 N) =
 \Ric_{N,N} - \,2\,\sigma_2.
\end{equation}
\end{example}

Next, let a Riemannian manifold $(M,g)$ be endowed with three pairwise orthogonal $n_i$-dimensional distributions ${\cal D}_i\ (i=1,2,3)$
such that $TM={\cal D}_1\oplus{\cal D}_2\oplus {\cal D}_3$.
We call $(M,g,{\cal D}_1,{\cal D}_2,{\cal D}_3)$ a \textit{Riemannian almost 3-product manifold}.
Denote by ${\cal D}^\bot_i$ the orthogonal complement to ${\cal D}_i$ in $TM$, its rank is $n^\bot_i=n-n_i$.

\begin{remark}\rm
A {Riemannian almost multi-product manifold} is a Riemannian manifold equipped with $k\ge2$ pairwise orthogonal complementary distributions
${\cal D}_1,\ldots,{\cal D}_k$.
We meet this structure in such topics of differential geometry as multiply-warped (or twisted) products and the webs of foliations;
see e.g.,~\cite{rov-IF-k}.
In~particular, almost 3-product manifolds appear naturally among
almost para-$f$-manifolds, lightlike manifolds, orientable 3-manifolds (since they admit 3 linearly independent vector~fields),
webs composed of 3 generic foliations,
minimal hypersurfaces in space forms with 3 distinct principal curvatures,
tubes over standard embeddings of a projective plane in a sphere, etc.
\end{remark}

The {second fundamental forms} ${h}_i:{\cal D}_i\times {\cal D}_i\to{\cal D}^\bot_i$ and
the {integrability tensors} ${T}_i:{\cal D}_i\times {\cal D}_i\to{\cal D}^\bot_i$ of ${\cal D}_i$ (and similarly, $h^\bot_i$ and $T^\bot_i$ of orthogonal distributions ${\cal D}^\bot_i$)
are defined by
\begin{equation*}
 2\,h_i(X,Y)= (\nabla_X Y+\nabla_Y X)^\bot,\quad 2\,T_i(X,Y) = (\nabla_X Y-\nabla_Y X)^\bot = [X, Y]^\bot .
\end{equation*}
Then $H_i=\tr_g h_i$ is called the mean curvature vector field of the distribution ${\cal D}_i$.
 A distribution ${\cal D}_i$ is integrable (or involutive) if $T_i=0$,
and ${\cal D}_i$ is \textit{totally umbilical}, \textit{minimal}, or \textit{totally geodesic},
if  ${h}_i=({H}_i/n_i)\,g,\ {H}_i=0$, or ${h}_i=0$, respectively.
Let $x\in M$ and $\{e_i\}$ be an adapted orthonormal frame on the subspace ${\cal D}_1(x)\oplus{\cal D}_2(x)$, i.e.,
 $\{e_1,\ldots, e_{n_1}\}\subset{{\cal D}_1(x)},\ \{e_{n_{1}+1},\ldots, e_{d}\}\subset{{\cal D}_2(x)}$.
The~\textit{mutual curvature} of a pair $({\cal D}_1,{\cal D}_2)$ is a function on~$M$ defined by
\[
 {\rm S}_{\,\rm m}({\cal D}_1(x),{\cal D}_2(x)) = \sum\nolimits_{\,a\le n_1,\ n_1<b\le d} K(e_a,{e}_b),
\]
and it does not depend on the choice of~frames, e.g.,~\cite{rst-40}.
The~mixed scalar curvature of the triple $({\cal D}_1,{\cal D}_2,{\cal D}_3)$ is defined similarly as
${\rm S}_{\,\rm mix}({\cal D},{\cal D}^\bot)$ for a pair $({\cal D},{\cal D}^\bot)$,
and it can be presented as follows, e.g., \cite{rov-IF-k}:
\begin{equation}\label{E-Smix-3a}
 {\rm S}_{\,\rm mix}({\cal D}_1,{\cal D}_2,{\cal D}_3)={\rm S}_{\,\rm m}({\cal D}_1,{\cal D}_2)+{\rm S}_{\,\rm m}({\cal D}_1,{\cal D}_3)+{\rm S}_{\,\rm m}({\cal D}_2,{\cal D}_3).
\end{equation}

\begin{lemma}\label{L-Qi}
The following formulas are true:
\begin{eqnarray}\label{eq-IF-3}
 && 2\,{\rm S}_{\,\rm mix}({\cal D}_1,{\cal D}_2,{\cal D}_3) = \Div(H_1+H_1^\bot+H_2+H_2^\bot+H_3+H_3^\bot) -Q_{1}-Q_{2}-Q_{3} , \\
\label{eq-IF-l2}
 && S_{\rm m}({\cal D}_1,{\cal D}_2) = \Div(H_1+H_1^\bot+H_2+H_2^\bot-H_3-H_3^\bot) -Q_{1}-Q_{2}+Q_{3} ,
\end{eqnarray}
where \,$Q_{i}=\|\,h_i\,\|^2+\|\,h_i^\perp\,\|^2-\|\,H_i\,\|^2-\|\,H_i^\perp\,\|^2-\|\,T_i\,\|^2-\|\,T_i^\perp\,\|^2\,$ for $\,i=1,2,3$.
\end{lemma}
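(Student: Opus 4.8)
The plan is to derive both identities from three applications of Walczak's formula \eqref{eq-wal2}, one to each complementary orthogonal pair $({\cal D}_i,{\cal D}_i^\bot)$ with $i=1,2,3$, and then to take elementary linear combinations of the three resulting scalar equalities.

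First I would note that for each fixed $i$ the orthogonal splitting $TM={\cal D}_i\oplus{\cal D}_i^\bot$ is exactly the complementary configuration to which \eqref{eq-wal2} applies, the relevant invariants being $h_i,h_i^\bot$, $T_i,T_i^\bot$ and $H_i,H_i^\bot$. Hence \eqref{eq-wal2} reads
\[
 \Div(H_i+H_i^\bot)={\rm S}_{\,\rm mix}({\cal D}_i,{\cal D}_i^\bot)+Q_i,\qquad i=1,2,3,
\]
with $Q_i$ precisely the quantity in the statement. I would then rewrite each mixed scalar curvature in terms of mutual curvatures: fixing an adapted orthonormal frame compatible with all three distributions and splitting the defining sum for ${\rm S}_{\,\rm mix}({\cal D}_i,{\cal D}_i^\bot)$ according to whether the second frame vector lies in ${\cal D}_j$ or in ${\cal D}_k$, where $\{i,j,k\}=\{1,2,3\}$, gives
\[
 {\rm S}_{\,\rm mix}({\cal D}_i,{\cal D}_i^\bot)={\rm S}_{\,\rm m}({\cal D}_i,{\cal D}_j)+{\rm S}_{\,\rm m}({\cal D}_i,{\cal D}_k).
\]

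To obtain \eqref{eq-IF-3} I would add the three identities: each mutual curvature ${\rm S}_{\,\rm m}({\cal D}_a,{\cal D}_b)$ then appears twice, and comparison with \eqref{E-Smix-3a} produces the coefficient $2$ in front of ${\rm S}_{\,\rm mix}({\cal D}_1,{\cal D}_2,{\cal D}_3)$. To obtain \eqref{eq-IF-l2} I would instead form the combination (identity for $i=1$) plus (identity for $i=2$) minus (identity for $i=3$): here the mutual curvatures ${\rm S}_{\,\rm m}({\cal D}_1,{\cal D}_3)$ and ${\rm S}_{\,\rm m}({\cal D}_2,{\cal D}_3)$ cancel and only ${\rm S}_{\,\rm m}({\cal D}_1,{\cal D}_2)$ survives, while the divergence term assembles into $\Div(H_1+H_1^\bot+H_2+H_2^\bot-H_3-H_3^\bot)$ and the quadratic terms into $-Q_1-Q_2+Q_3$, matching the signs displayed.

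The whole argument is essentially bookkeeping, so I do not expect a serious obstacle; the one point that genuinely needs care is that $h_i^\bot,T_i^\bot,H_i^\bot$ are the extrinsic invariants of the \emph{composite} distribution ${\cal D}_i^\bot={\cal D}_j\oplus{\cal D}_k$ taken as a single sub-bundle --- they record only the ${\cal D}_i$-component of $\nabla_XY$ for $X,Y\in{\cal D}_i^\bot$ --- and must not be confused with the tensors $h_j,h_k$ of the finer pieces. Once this identification is fixed, \eqref{eq-wal2} applies verbatim to each of the three pairs, and no separate analysis of the internal geometry of ${\cal D}_i^\bot$ is required: the proof rests entirely on the known pair-formula together with the additive splitting of the mixed scalar curvature into mutual curvatures.
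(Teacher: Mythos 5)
Your proof is correct and follows essentially the same route as the paper: both arguments apply Walczak's formula \eqref{eq-wal2} to each complementary pair $({\cal D}_i,{\cal D}_i^\bot)$, use the splitting ${\rm S}_{\,\rm mix}({\cal D}_i,{\cal D}_i^\bot)={\rm S}_{\,\rm m}({\cal D}_i,{\cal D}_j)+{\rm S}_{\,\rm m}({\cal D}_i,{\cal D}_k)$, and take the linear combinations $(+,+,+)$ and $(+,+,-)$. One caveat: in the combination $(+,+,-)$ the surviving term ${\rm S}_{\,\rm m}({\cal D}_1,{\cal D}_2)$ is contributed once by the $i=1$ identity and once by the $i=2$ identity, so the left-hand side is $2\,{\rm S}_{\,\rm m}({\cal D}_1,{\cal D}_2)$ rather than ${\rm S}_{\,\rm m}({\cal D}_1,{\cal D}_2)$ --- a factor of $2$ that is also missing from \eqref{eq-IF-l2} as stated and from the paper's own derivation, so your remark that the coefficients ``match the signs displayed'' should be tempered on this point.
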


\begin{proof}
We can write \eqref{E-Smix-3a} in the form
\begin{equation*}
 2\,{\rm S}_{\,\rm mix}({\cal D}_1,{\cal D}_2,{\cal D}_3) = {\rm S}_{\,\rm mix}({\cal D}_1,{\cal D}_1^\bot)
 +{\rm S}_{\,\rm mix}({\cal D}_2,{\cal D}_2^\bot) +{\rm S}_{\,\rm mix}({\cal D}_3,{\cal D}_3^\bot),
\end{equation*}
or in the form (expressing mutual curvature in terms of mixed scalar curvature)
\begin{eqnarray*}
 {\rm S}_{\,\rm m}({\cal D}_1,{\cal D}_2) \eq {\rm S}_{\,\rm mix}({\cal D}_1,{\cal D}_2,{\cal D}_3)
 - 2\,{\rm S}_{\,\rm mix}({\cal D}_1\oplus{\cal D}_2, {\cal D}_3) \\
 \eq {\rm S}_{\,\rm mix}({\cal D}_1,{\cal D}_1^\bot)
 +{\rm S}_{\,\rm mix}({\cal D}_2,{\cal D}_2^\bot) -{\rm S}_{\,\rm mix}({\cal D}_3,{\cal D}_3^\bot).
\end{eqnarray*}
Thus, using \eqref{eq-wal2}, we get \eqref{eq-IF-3} and \eqref{eq-IF-l2}.
\end{proof}


\begin{example}\label{Ex-2.4}\rm
Let $(M^3,g)$ admit three pairwise orthogonal codimension-one foliations ${\mathcal F}_i$, and let $N_i$ be unit vector fields orthogonal to ${\mathcal F}_i$.
Writing down \eqref{eq-wal2b} for each $N_i$, summing for $i=1,2,3$, and using the equality
 $\tau=\sum\nolimits_{\,i=1}^3{\rm Ric}_{\,N_i,N_i}$, where $\tau$ is the scalar curvature of~$(M,g)$,
yields the {formula}
\[
 \Div\sum\nolimits_{\,i=1}^3(\nabla_{N_i}\,N_i+\sigma_1(A_{N_i}) N_i) =
 2\sum\nolimits_{\,i=1}^3\sigma_2({\mathcal F}_i)-\tau\,.
\]
 Two
consequences, using $2\,\sigma_2(A_{N_i})=\tr(A_{N_i})^2-(\tr A_{N_i})^2$\,:

\quad
$\bullet$~if $\tau<0$ then each foliation ${\mathcal F}_i$ cannot be totally umbilical;

\quad
$\bullet$~if $\tau>0$ then each foliation ${\mathcal F}_i$ cannot be harmonic.
\end{example}

\begin{remark}\label{R-3.5}\rm
Applying the Divergence Theorem to \eqref{eq-wal2}, \eqref{eq-wal2b}, \eqref{eq-IF-3} and \eqref{eq-IF-l2} on a compact Riemannian manifold gives well-known integral formulas.
These formulas
can be extended for distributions
defined on the complement $M\setminus\Sigma$ of a union $\Sigma$ of finitely many closed codimension
$k\ge 2$
submanifolds of a manifold~$M$.
Namely,
\textit{if $(M,g)$ is a closed oriented Riemannian manifold, $X$ is a vector field on an open set $M\setminus\Sigma$, $(k-1)(p-1)\ge 1$
and $\|\,X\|\,\in L^p(M,g)$, then $\int_M\,(\Div X)\,d\vol_g = 0$}, see \cite{wa2} and \cite[p.~75]{Rov-Wa-2021}.
\end{remark}



\section{Invariants based on scalar and mutual curvature}
\label{sec:01b}

Here, we introduce and study scalar invariants based on scalar and mutual curvature.

Given integer $k\ge2$, let $V_1,\ldots,V_k$ be mutually orthogonal subspaces of ${\cal D}_x$ at a point $x\in M$
with $\dim V_i=n_i\ge1$.
Let $\{e_i\}$ be an adapted orthonormal basis of the subspace $V=\bigoplus_{\,i=1}^{\,k} V_i$, i.e.,
 $\{e_1,\ldots, e_{n_1}\}\subset V_1,
 \ldots,
 \{e_{n_{k-1}+1},\ldots, e_{n_k}\}\subset V_k$.
Define the \textit{mutual curvature} of the set $\{V_1,\ldots,V_k\}$~by
\begin{equation*}
 {\rm S}_{\,\rm m}(V_1,\ldots,V_k)
 = \sum\nolimits_{\,i<j}\sum\nolimits_{\,n_{i-1}<a\,\le n_i,\ n_{j-1}<b\le n_j} K(e_a\wedge\,{e}_b).
\end{equation*}
Note that ${\rm S}_{\,\rm m}(V_1,\ldots,V_k)$ does not depend on the choice of frames.
We immediately have
\[
 {\rm S}_{\,\rm m}(V_1,\ldots,V_k) = \sum\nolimits_{\,i<j} {\rm S}_{\,\rm m}(V_i, V_j),
\]
where
 ${\rm S}_{\,\rm m}(V_i, V_j) = \sum\nolimits_{\,n_{i-1}<a\,\le n_i,\ n_{j-1}<b\le n_j} K(e_a\wedge\,{e}_b)$.

For the scalar curvature $\tau(V)=\tr_g\Ric|_{\,V}$ (the trace of the Ricci tensor on a subspace $V=\bigoplus_{\,i=1}^{\,k} V_i$)
and the scalar curvatures $\tau(V_i)=\tr_g\Ric|_{\,V_i}$ of subspaces $V_i$ we get
\begin{equation}\label{E-Smix-3}
 \tau(V) = 2\,{\rm S}_{\,\rm m}(V_1,\ldots, V_k) +\sum\nolimits_{\,i=1}^k \tau(V_i)\,.
\end{equation}
For example, if all subspaces $V_i$ are one-dimensional, then $2\,{\rm S}_{\,\rm m}(V_1,\ldots, V_k)=\tau(V)$.

For an integer $k\ge2$, denote by $S(d,k)$ the set of unordered $k$-tuples $(n_1,\ldots,n_k)$ of natural numbers satisfying
$n_1+\ldots+n_k\le d$.
Denote by $S(d)$ the set of all unordered $k$-tuples with~$k\ge2$ and $n_1+\ldots+n_k\le d$.

\begin{definition}[\rm\cite{r1-2022}]\rm
For a $k$-tuple $(n_1,\ldots,n_k)\in S(d,k)$ the scalar invariants $\delta^\pm_{{\rm m},{\cal D}}(n_1,\ldots, n_k)$
are defined~by
\begin{equation*}
 \delta^+_{{\rm m},{\cal D}}(n_1,\ldots, n_k)(x) = \max {\rm S}_{\,\rm m}(V_1,\ldots,V_k),\quad
 \delta^-_{{\rm m},{\cal D}}(n_1,\ldots, n_k)(x) = \min {\rm S}_{\,\rm m}(V_1,\ldots,V_k),
\end{equation*}
where $V_1,\ldots,V_k$ run over all $k$ mutually orthogonal subspaces of ${\cal D}_x$ with $\dim V_i=n_i\ (i=1,\ldots,k)$.
For~${\cal D}=TM$ we get invariants $\delta^\pm_{\rm m}(n_1,\ldots, n_k)=\delta^\pm_{{\rm m},TM}(n_1,\ldots, n_k)$, see also \cite{r-w-2022}.
\end{definition}

If the sectional curvature of $(M, g)$ along ${\cal D}$ satisfies $c\le K_{\,|\,{\cal D}}\le C$ and $\sum\nolimits_{\,i=1}^k n_i = s\le d$, then
\begin{eqnarray*}
  \frac c2\,(s^2-\sum\nolimits_{\,i} n_i^2)=
  c\sum\nolimits_{\,i<j} n_i\,n_j \le
  \delta^-_{{\rm m},{\cal D}}(n_1,\ldots n_k)\le \\
  \le \delta^+_{{\rm m},{\cal D}}(n_1,\ldots n_k) \le C\sum\nolimits_{\,i<j} n_i\,n_j
 =\frac C2\,(s^2-\sum\nolimits_{\,i} n_i^2)\,.
\end{eqnarray*}

\begin{example}\rm
Recall that for a subspace $V$ spanned by $q+1$ orthonormal vectors $\{e_0,e_1,\ldots,e_{q}\}$ of $(M, g)$, the \textit{$q$-th Ricci curvature} is
 $\Ric_{\,q}(V)=\sum\nolimits_{\,i=1}^{q}\,K(E_0,E_i)$,
e.g.,~\cite{r-98}.
For $k=2$ and $n_1=1$, using the intermediate Ricci curvature, we get
$\delta^+_{\rm m}(1, n_2)(x)=\max \Ric_{n_2}(V)$
 and 
 $\delta^-_{\rm m}(1, n_2)(x)=\min \Ric_{n_2}(V)$,
where
$V=span(V_1,V_2)$ and
$V_1,V_2$ run over all mutually orthogonal subspaces of ${\cal D}_x$ such that $\dim V_1=1$ and $\dim V_2=n_2$.
\end{example}

For a $k$-tuple $(k\ge0)$ and $x\in M$, B.-Y~Chen \cite[Sect.~13.2]{chen1} defined the following curvature invariants:
\begin{eqnarray}\label{E-Chen-delta}
\nonumber
 2\,\delta(n_1,\ldots, n_k)(x)=\tau(x)-\min\,\{\tau(V_1)+\ldots +\tau(V_k)\} ,\\
 2\,\hat\delta(n_1,\ldots, n_k)(x)=\tau(x)-\max\,\{\tau(V_1)+\ldots +\tau(V_k)\},
\end{eqnarray}
where $V_1,\ldots,V_k$ run over all $k$ mutually orthogonal subspaces of $T_x M$ with $\dim V_i=n_i\ (i=1,\ldots,k)$.
The~coefficient 2 in \eqref{E-Chen-delta} is due to the definition of the scalar curvature in \cite{chen1} as half of the ``trace~Ricci".


\begin{definition}[\rm\cite{r1-2022}]\rm
For each $k$-tuple $(k\ge0)$ and $x\in M$, we define Chen-type $\delta_{\cal D}$-invariants of $(M,g;{\cal D})$
by
\begin{eqnarray}\label{E-ineq1-D}
\nonumber
 2\,\delta_{\cal D}(n_1,\ldots, n_k)(x)=\tau({\cal D}_x)-\min\,\{\tau(V_1)+\ldots +\tau(V_k)\} ,\\
 2\,\hat\delta_{\cal D}(n_1,\ldots, n_k)(x)=\tau({\cal D}_x)-\max\,\{\tau(V_1)+\ldots +\tau(V_k)\}\,,
\end{eqnarray}
where $V_1,\ldots,V_k$ run over all $k$ mutually orthogonal subspaces of ${\cal D}_x$ with $\dim V_i=n_i\ (i=1,\ldots,k)$.
\end{definition}

The theory of $\delta_{\cal D}$-invariants \eqref{E-ineq1-D} of a sub-Riemannian manifold can be developed similarly to the theory of Chen's $\delta$-invariants of a Riemannian manifold.

The~$\delta^\pm_{{\rm m},{\cal D}}$-invariants are related with the curvature invariants in \eqref{E-ineq1-D} by the following inequalities.

\begin{proposition}
Let $k\ge2$. If $n_1+\ldots+n_k<d$, then the following inequalities are valid:
\begin{eqnarray}\label{E-ineq1}
\nonumber
  \delta^+_{{\rm m},{\cal D}}(n_1,\ldots, n_k) \ge \delta_{\cal D}(n_1,\ldots, n_k) -\delta_{\cal D}(n_1+\ldots+n_k)\,,\\
  \delta^-_{{\rm m},{\cal D}}(n_1,\ldots, n_k) \le \hat\delta_{\cal D}(n_1,\ldots, n_k) -\hat\delta_{\cal D}(n_1+\ldots+n_k)\,,
\end{eqnarray}
and
if $n_1+\ldots+n_k{=}d$, then
$\hat\delta_{\cal D}(n_1,\ldots, n_k){=}\delta^-_{{\rm m},{\cal D}}(n_1,\ldots, n_k)\le
\delta^+_{{\rm m},{\cal D}}(n_1,\ldots, n_k){=}\delta_{\cal D}(n_1,\ldots, n_k)$.
In particular,
if $n_1+\ldots+n_k=d-1$, then
 $\hat\delta_{\cal D}(n_1,\ldots, n_k)-\min\Ric_{d-1}({\cal D})\ge \delta^-_{{\rm m},{\cal D}}(n_1,\ldots, n_k)\ge
 \delta^+_{{\rm m},{\cal D}}(n_1,\ldots, n_k)\ge\delta_{\cal D}(n_1,\ldots, n_k)-\max\Ric_{d-1}({\cal D})$.
\end{proposition}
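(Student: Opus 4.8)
The plan is to reduce everything to the additive identity \eqref{E-Smix-3}, which for mutually orthogonal $V_1,\dots,V_k\subset{\cal D}_x$ with $V=\bigoplus_i V_i$ I rewrite as $\sum_{i}\tau(V_i)=\tau(V)-2\,{\rm S}_{\,\rm m}(V_1,\dots,V_k)$. Setting $s=n_1+\dots+n_k$ and reading off the definitions \eqref{E-ineq1-D} (including the one-argument case), I first record the dictionary $\min\{\sum_i\tau(V_i)\}=\tau({\cal D}_x)-2\,\delta_{\cal D}(n_1,\dots,n_k)$, $\max\{\sum_i\tau(V_i)\}=\tau({\cal D}_x)-2\,\hat\delta_{\cal D}(n_1,\dots,n_k)$, and likewise $\min_{W}\tau(W)=\tau({\cal D}_x)-2\,\delta_{\cal D}(s)$, $\max_W\tau(W)=\tau({\cal D}_x)-2\,\hat\delta_{\cal D}(s)$, where $W$ runs over all $s$-dimensional subspaces of ${\cal D}_x$. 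Under this dictionary each assertion becomes a comparison of two constrained optimizations of $\tau(V)-2\,{\rm S}_{\,\rm m}$.

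For the first line of \eqref{E-ineq1} I would fix any admissible tuple $V_1,\dots,V_k$ and put $V=\bigoplus_iV_i$, so $\dim V=s$. Since $\tau(V)\ge\min_W\tau(W)$ and ${\rm S}_{\,\rm m}(V_1,\dots,V_k)\le\delta^+_{{\rm m},{\cal D}}(n_1,\dots,n_k)$, the identity yields $\sum_i\tau(V_i)\ge\min_W\tau(W)-2\,\delta^+_{{\rm m},{\cal D}}(n_1,\dots,n_k)$. Taking the minimum over all admissible tuples and substituting the dictionary turns this into $\delta^+_{{\rm m},{\cal D}}(n_1,\dots,n_k)\ge\delta_{\cal D}(n_1,\dots,n_k)-\delta_{\cal D}(s)$. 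The second line is the mirror image: for any admissible tuple $\sum_i\tau(V_i)=\tau(V)-2\,{\rm S}_{\,\rm m}\le\max_W\tau(W)-2\,\delta^-_{{\rm m},{\cal D}}$, and taking the maximum over tuples gives $\delta^-_{{\rm m},{\cal D}}(n_1,\dots,n_k)\le\hat\delta_{\cal D}(n_1,\dots,n_k)-\hat\delta_{\cal D}(s)$.

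The equality case $s=d$ is where the two optimizations collapse into one: the only $d$-dimensional subspace of ${\cal D}_x$ is ${\cal D}_x$ itself, so $\tau(V)\equiv\tau({\cal D}_x)$ is a constant and the identity reads $\sum_i\tau(V_i)=\tau({\cal D}_x)-2\,{\rm S}_{\,\rm m}(V_1,\dots,V_k)$. Minimizing the left side is then exactly maximizing ${\rm S}_{\,\rm m}$, which gives $\delta_{\cal D}(n_1,\dots,n_k)=\delta^+_{{\rm m},{\cal D}}(n_1,\dots,n_k)$; dually, maximizing the left side is minimizing ${\rm S}_{\,\rm m}$, giving $\hat\delta_{\cal D}(n_1,\dots,n_k)=\delta^-_{{\rm m},{\cal D}}(n_1,\dots,n_k)$; the inner comparison $\delta^-_{{\rm m},{\cal D}}\le\delta^+_{{\rm m},{\cal D}}$ is the trivial min $\le$ max.

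Finally, for $s=d-1$ I would specialize the first part after evaluating the one-argument invariants. A hyperplane $V\subset{\cal D}_x$ has a unit normal $N$ inside ${\cal D}_x$; applying \eqref{E-Smix-3} to the pair $(V,\langle N\rangle)$ and using $\tau(\langle N\rangle)=0$ gives $\tau(V)=\tau({\cal D}_x)-2\,\Ric_{d-1}(N)$, where $\Ric_{d-1}(N)=\sum_{a}K(e_a\wedge N)$ is the intermediate Ricci curvature of $N$ within ${\cal D}$. Hence $\min_V\tau(V)$ corresponds to $\max\Ric_{d-1}({\cal D})$ and $\max_V\tau(V)$ to $\min\Ric_{d-1}({\cal D})$, so $\delta_{\cal D}(d-1)=\max\Ric_{d-1}({\cal D})$ and $\hat\delta_{\cal D}(d-1)=\min\Ric_{d-1}({\cal D})$; inserting these into the two lines of \eqref{E-ineq1} produces the stated two-sided estimate, the inner comparison again being $\delta^-_{{\rm m},{\cal D}}\le\delta^+_{{\rm m},{\cal D}}$. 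I expect the only genuine care to lie in keeping the directions of the inequalities straight and in remembering that $\min_W\tau(W)$ and $\min\{\sum_i\tau(V_i)\}$ are \emph{independent} optimizations — it is precisely this mismatch, that a single optimal subspace need not be split optimally, which forces inequalities in the first part, whereas $s=d$ (and the degenerate $s=d-1$ splitting into a hyperplane plus a line) removes the ambiguity and yields equalities.
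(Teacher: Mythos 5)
Your argument is correct and is essentially the paper's own proof: both hinge on the identity \eqref{E-Smix-3} to rewrite $\sum_i\tau(V_i)$ as $\tau(V)-2\,{\rm S}_{\,\rm m}(V_1,\dots,V_k)$ and then compare the two independent optimizations, the paper via ``max of a sum $\le$ sum of maxima'' and you via a termwise bound followed by optimizing over tuples, which is the same estimate. Your explicit verifications that $s=d$ forces $\tau(V)\equiv\tau({\cal D}_x)$ and that $\delta_{\cal D}(d-1)=\max\Ric_{d-1}({\cal D})$, $\hat\delta_{\cal D}(d-1)=\min\Ric_{d-1}({\cal D})$ merely fill in steps the paper asserts without proof, and you are right that the middle link of the displayed $d-1$ chain should read $\delta^-_{{\rm m},{\cal D}}\le\delta^+_{{\rm m},{\cal D}}$ (a typo in the statement).
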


\begin{proof}
Using \eqref{E-Smix-3} and the equality $-\min a = \max(-a)$, we get
\begin{eqnarray*}
 && 2\,\delta_{\cal D}(n_1,\ldots, n_k)(x) =\tau(V)-\min\,\{\tau(V_1)+\ldots +\tau(V_k)\} \\
 &&= \tau({\cal D}_x) +\max(\tau_k(x)-(\tau(V_1)+\ldots +\tau(V_k)) -\tau_k(V)  ) \\
 && \le \tau({\cal D}_x) -\min \tau_k(x) + 2\max {\rm S}_{\,\rm m}(V_1,\ldots, V_k) \\
 && = 2\,\delta_{\cal D}(n_1+\ldots+n_k)(x) + 2\,\delta^+_{{\rm m},{\cal D}}(n_1,\ldots, n_k)(x),
\end{eqnarray*}
hence, \eqref{E-ineq1}$_1$ is valid. The proof of \eqref{E-ineq1}$_2$ is similar.
The case of $n_1+\ldots+n_k=d$ follows from \eqref{E-ineq1}.
The case of $n_1+\ldots+n_k=d-1$ follows from $\delta_{\cal D}(d-1)(x)=\max\Ric_{d-1}({\cal D}_x)$
and $\hat\delta_{\cal D}(d-1)(x)=\min\Ric_{d-1}({\cal D}_x)$.
\end{proof}

\begin{corollary}
If $(M,g;{\cal D})$ has nonnegative sectional curvature along ${\cal D}$ and $k\ge2$, then
\[
 \hat\delta(n_1,\ldots, n_k) \le
 \delta^-_{{\rm m},{\cal D}}(n_1,\ldots, n_k)\le
 \delta^+_{{\rm m},{\cal D}}(n_1,\ldots, n_k)\le\delta(n_1,\ldots, n_k),\
\]
and if $(M,g;{\cal D})$ has nonpositive sectional curvature along ${\cal D}$, then the inequalities are opposite.
\end{corollary}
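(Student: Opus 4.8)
The plan is to route everything through the single identity \eqref{E-Smix-3}, which I rewrite as $2\,{\rm S}_{\,\rm m}(V_1,\ldots,V_k)=\tau(V)-\sum_{i}\tau(V_i)$ with $V=\bigoplus_{i}V_i\subseteq{\cal D}_x$, and to read the Chen-type invariants $\hat\delta,\delta$ as the distribution invariants $\hat\delta_{\cal D},\delta_{\cal D}$ of \eqref{E-ineq1-D}. With this reading each of the four quantities in the asserted chain is an extremum over the \emph{same} family of admissible $k$-tuples $\{V_i\}$, and the only thing that separates the mutual invariants from the $\delta_{\cal D}$-invariants is the appearance of the family-dependent term $\tau(V)$ in place of the fixed $\tau({\cal D}_x)$. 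The sign hypothesis on $K_{|{\cal D}}$ is exactly what is needed to compare these two.

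First I would record the monotonicity of partial scalar curvature: when $K_{|{\cal D}}\ge0$, every subspace $W\subseteq{\cal D}_x$ satisfies $\tau(W)\le\tau({\cal D}_x)$, since $\tau({\cal D}_x)-\tau(W)$ is a sum of sectional curvatures of planes meeting ${\cal D}_x\ominus W$, all nonnegative (and the inequality reverses when $K_{|{\cal D}}\le0$). Inserting $\tau(V)\le\tau({\cal D}_x)$ into the rewritten identity gives $2\,{\rm S}_{\,\rm m}(V_1,\ldots,V_k)\le\tau({\cal D}_x)-\sum_i\tau(V_i)$; taking the maximum over $\{V_i\}$ and using $-\min a=\max(-a)$ yields $2\,\delta^+_{{\rm m},{\cal D}}\le\tau({\cal D}_x)-\min\sum_i\tau(V_i)=2\,\delta_{\cal D}(n_1,\ldots,n_k)$, which is the right-hand inequality. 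Reversing the monotonicity handles the nonpositive case, and the middle inequality $\delta^-_{{\rm m},{\cal D}}\le\delta^+_{{\rm m},{\cal D}}$ is immediate from $\min\le\max$.

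The hard part will be the left-hand inequality, because there the same monotonicity pushes the wrong way: bounding $\tau(V)$ from above controls $\min{\rm S}_{\,\rm m}$ from above, whereas a sharp lower comparison with $\hat\delta_{\cal D}$ is what is wanted. The mechanism I would lean on is the boundary case $s:=\sum_i n_i=d$ of the preceding Proposition: then the only admissible $V$ is ${\cal D}_x$, so $\tau(V)=\tau({\cal D}_x)$ is constant along the family and \eqref{E-Smix-3} upgrades both extrema to the exact equalities $\delta^-_{{\rm m},{\cal D}}=\hat\delta_{\cal D}$ and $\delta^+_{{\rm m},{\cal D}}=\delta_{\cal D}$, with no curvature assumption at all. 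For $s<d$ the term $\tau(V)$ genuinely varies, and the crux is to control it from below by $\tau({\cal D}_x)$ along the minimizing family; already the two-sided curvature bound stated just before this corollary gives $\delta^-_{{\rm m},{\cal D}}\ge0$ under $K_{|{\cal D}}\ge0$, but matching this coarse bound to the sharp $\hat\delta_{\cal D}$ is the delicate step. I would attack it by trying to align the minimizer of $\sum_i\tau(V_i)$ with a family whose span $V$ exhausts ${\cal D}_x$, so that the defect $\tau({\cal D}_x)-\tau(V)\ge0$ can be absorbed into the chain rather than working against it.
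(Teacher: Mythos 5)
Your handling of the middle and right-hand inequalities is correct and is the natural argument: under $K_{|{\cal D}}\ge0$ one has $\tau(W)\le\tau({\cal D}_x)$ for every $W\subseteq{\cal D}_x$, so \eqref{E-Smix-3} gives $2\,{\rm S}_{\,\rm m}(V_1,\ldots,V_k)\le\tau({\cal D}_x)-\sum_i\tau(V_i)$, and maximizing over admissible families yields $\delta^+_{{\rm m},{\cal D}}\le\delta_{\cal D}(n_1,\ldots,n_k)$. Your observation that for $\sum_i n_i=d$ the whole chain degenerates to the equalities of the preceding Proposition (with no curvature hypothesis needed) is also right. Note that the paper supplies no proof of this corollary at all, so beyond the Proposition there is no argument of its own to compare with.

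The left-hand inequality is a genuine gap, and the plan you sketch for it cannot succeed, because for $\sum_i n_i<d$ that inequality is false. The very bound you established, $2\,{\rm S}_{\,\rm m}(V_1,\ldots,V_k)\le\tau({\cal D}_x)-\sum_i\tau(V_i)$, when \emph{minimized} rather than maximized over admissible families gives $2\,\delta^-_{{\rm m},{\cal D}}\le\tau({\cal D}_x)-\max\sum_i\tau(V_i)=2\,\hat\delta_{\cal D}(n_1,\ldots,n_k)$, i.e.\ the reverse of what is asserted; the Proposition's second inequality says the same once one notes $\hat\delta_{\cal D}(n_1+\ldots+n_k)\ge0$ when $K_{|{\cal D}}\ge0$. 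A concrete counterexample to $\hat\delta_{\cal D}\le\delta^-_{{\rm m},{\cal D}}$: on the unit sphere $S^3$ with ${\cal D}=TM$, $k=2$ and $n_1=n_2=1$, one has $\tau(V_i)=0$ for lines, hence $\hat\delta_{\cal D}(1,1)=\frac12\,\tau=3$, while $\delta^-_{{\rm m},{\cal D}}(1,1)=\min K=1$. So your instinct that the monotonicity ``pushes the wrong way'' is the correct diagnosis, but the conclusion to draw is that the statement itself must be corrected for $s<d$ (for instance to $\delta^-_{{\rm m},{\cal D}}\le\hat\delta_{\cal D}\le\delta_{\cal D}$ together with $\delta^-_{{\rm m},{\cal D}}\le\delta^+_{{\rm m},{\cal D}}\le\delta_{\cal D}$ under $K_{|{\cal D}}\ge0$), not that a cleverer choice of minimizing family will rescue it; the chain as written holds only in the complementary case $\sum_i n_i=d$.
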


\section{Geometric inequalities for a submanifold 
with distributions}
\label{sec:02}

First, we consider \textit{adapted} isometric immersions $f: (M,g; {\cal D}) \to (\bar M,\bar g; \bar{\cal D})$
of sub-Riemannian manifolds, i.e., $f_*({\cal D})\subset \bar{\cal D}_{\,|\,f(M)}$.
If
${\cal D}$ and $\bar{\cal D}$ are the sums of $s\ge2$ mutually orthogonal distributions,
i.e., ${\cal D}=\bigoplus_{i=1}^s{\cal D}_i$ and $\bar{\cal D}=\bigoplus_{i=1}^s\bar{\cal D}_i$,
then we also require the following: $f_*({\cal D}_i)\subset \bar{\cal D}_{i\,| f(M)}$ for all $i$.
Below we assume $s=2$.

\begin{remark}\rm
A sub-Riemannian structure on a smooth manifold $M$ can be obtained from a special immersion of $M$ in $(\bar M,\bar g; \bar{\cal D})$.
Namely, let $f_*(TM)$ intersects transversally with the distribution $\bar{\cal D}$ restricted to $f(M)$, then
$f: M \to \bar M$ induces a required distribution ${\cal D}=f_*^{-1}(\bar{\cal D}\cap f(TM))$ on $M$ with induced metric $g$.
\end{remark}

We will identify $M$ with its image $f(M)$ (since the induced metric on $f(M)$ is equal to $g$) and put a top ``bar" for objects related to $\bar M$.
Let $TM^\bot$ be the normal bundle of the submanifold $M\subset\bar M$
and $\bar{h}:TM\times TM\to TM^\bot$ be the second fundamental form of $M$.
Recall the Gauss equation for an isometric immersion $f$,
e.g., \cite{chen1}:
\begin{equation}\label{E-Gauss-class}
  \bar g(\bar R_{Y,Z}\,U,X) = g(R_{Y,Z}\,U,X) + \bar g(\bar{h}(Y,U), \bar{h}(Z,X)) -\bar g(\bar{h}(Z,U), \bar{h}(Y,X)) ,
  \quad U,X,Y,Z\in TM,
\end{equation}
where $\bar R$ and $R$ are the curvature tensors of $(\bar M,\bar g)$ and $(M,g)$, respectively.
The mean curvature vector of a subspace $V\subset{\cal D}_x$ is given by
$\bar{H}_V = \tr \bar{h}_{\,|\,V}= \sum_i \bar{h}(e_i,e_i)$, where $e_i$ is an orthonormal basis of~$V$.
Thus, $\bar{H}_{\cal D} = \tr_g \bar{h}_{\,|\,{\cal D}}$ is the mean curvature vector of~${\cal D}$,
and $\bar{H} = \tr_g \bar{h}$ is the mean curvature vector of $M$.
An~isometric immersion $f$ with the property $\bar{H}_{\cal D}=0$ is called ${\cal D}$-minimal
(minimal if $\bar{H}=0$).
Set
\begin{equation*}
 {\cal H}_x(s)
 = \max \{\,\|\,\bar{H}_V\,\|\,:
 \ V\subset {\cal D}_x,
 \ \dim V=s>0\}.
\end{equation*}
If $s=d$,
then $\bar{H}_V=\bar{H}_{{\cal D}_x}$.
For $s < d$ the condition
${\cal H}(s)=0$ implies that $\bar{h}_{\,|\,{\cal D}}=0$.

An~isometric immersion $f: (M,g; {\cal D}) \to (\bar M,\bar g)$
is called \textit{mixed totally geodesic} on $V=\bigoplus_{\,i=1}^{\,k} V_i\subset{\cal D}$ if
 $\bar{h}(X,Y)=0$
 for all
 $X\in{V}_i,\,Y\in{V}_j$ and $i\ne j$\,.

\begin{theorem}\label{T-D}
Let $f: (M,g; {\cal D})\to(\bar M,\bar g,\bar{\cal D})$ be an adapted isometric immersion, and $\sum_{\,i}n_i=s\le d$.
Then
\begin{eqnarray}\label{E-ineq-D}
 \delta^+_{{\rm m},{\cal D}}(n_1,\ldots,n_k) \le
 \bar\delta^+_{{\rm m},\bar{\cal D}}(n_1,\ldots,n_k) +
 \frac{k-1}{2\,k}\left\{\begin{array}{cc}
  {\cal H}_{\cal D}(s)^2,  & {\rm if}\ s<d, \\
  \|\,\bar{H}_{\cal D}\,\|^2, & {\rm if}\ s=d.
\end{array}\right.
\end{eqnarray}
The equality in \eqref{E-ineq-D} holds at a point $x\in M$ if and only if
there exist mutually orthogonal subspaces ${V}_1, \ldots, {V}_k$ of ${\cal D}_x$ with $\sum_{\,i}n_i=s$ such that
$f$ is mixed totally geodesic on $V=\bigoplus_{\,i=1}^{\,k} V_i$, $\bar{H}_1=\ldots=\bar{H}_k$, \ $\|\,\bar{H}_{V}\|\,={\cal H}_{{\cal D}_x}(s)$
and $\bar{\rm S}_{\,\rm m}({V}_1, \ldots, {V}_k)=\bar\delta^+_{{\rm m},\bar{\cal D}}(n_1,\ldots,n_k)(x)$.
\end{theorem}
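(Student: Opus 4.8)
The plan is to compare intrinsic and ambient mutual curvature term by term through the Gauss equation, and then to control the resulting mean‑curvature cross terms by an elementary inequality. Fix $x\in M$ and choose mutually orthogonal subspaces $V_1,\ldots,V_k\subset{\cal D}_x$ with $\dim V_i=n_i$ that \emph{realize the intrinsic maximum}, so that ${\rm S}_{\,\rm m}(V_1,\ldots,V_k)=\delta^+_{{\rm m},{\cal D}}(n_1,\ldots,n_k)(x)$; fix an adapted orthonormal basis $\{e_i\}$ of $V=\bigoplus_i V_i$. Specializing the Gauss equation \eqref{E-Gauss-class} with $Y=X=e_a$ and $Z=U=e_b$ gives, for orthonormal $e_a,e_b$,
\[
 K(e_a\wedge e_b)=\bar K(e_a\wedge e_b)-\|\bar h(e_a,e_b)\|^2+\bar g(\bar h(e_a,e_a),\bar h(e_b,e_b)),
\]
where $\bar K$ denotes the sectional curvature of $(\bar M,\bar g)$.

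Summing over $i<j$ and over $a\in V_i,\ b\in V_j$, and writing $\bar H_i=\sum_{a\in V_i}\bar h(e_a,e_a)$ for the mean curvature vector of $V_i$, I would obtain
\[
 {\rm S}_{\,\rm m}(V_1,\ldots,V_k)=\bar{\rm S}_{\,\rm m}(V_1,\ldots,V_k)-\sum\nolimits_{i<j}\sum\nolimits_{a,b}\|\bar h(e_a,e_b)\|^2+\sum\nolimits_{i<j}\bar g(\bar H_i,\bar H_j).
\]
The double sum of $\|\bar h(e_a,e_b)\|^2$ is nonnegative and is dropped; its vanishing is precisely mixed total geodesy on $V$. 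For the cross terms I would use the identity $k\sum_i\|\bar H_i\|^2-\|\bar H_V\|^2=\sum_{i<j}\|\bar H_i-\bar H_j\|^2\ge0$, where $\bar H_V=\sum_i\bar H_i$, which together with $\|\bar H_V\|^2=\sum_i\|\bar H_i\|^2+2\sum_{i<j}\bar g(\bar H_i,\bar H_j)$ yields
\[
 \sum\nolimits_{i<j}\bar g(\bar H_i,\bar H_j)\le\frac{k-1}{2k}\,\|\bar H_V\|^2 .
\]

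To finish, adaptedness gives $f_*(V_i)\subset\bar{\cal D}_{f(x)}$, so $\bar{\rm S}_{\,\rm m}(V_1,\ldots,V_k)\le\bar\delta^+_{{\rm m},\bar{\cal D}}(n_1,\ldots,n_k)(x)$; moreover $\|\bar H_V\|^2\le{\cal H}_{{\cal D}_x}(s)^2$ by definition of ${\cal H}$ when $s<d$, while $V={\cal D}_x$ forces $\bar H_V=\bar H_{{\cal D}_x}$ when $s=d$. Chaining these bounds with the fact that the left side equals $\delta^+_{{\rm m},{\cal D}}(n_1,\ldots,n_k)(x)$ yields \eqref{E-ineq-D}. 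The equality case is then read off by requiring all the inequalities to be sharp at once: the dropped sum vanishes (mixed total geodesy on $V$), the displayed identity gives $\bar H_1=\ldots=\bar H_k$, the ${\cal H}$‑bound gives $\|\bar H_V\|={\cal H}_{{\cal D}_x}(s)$, and the ambient mutual curvature attains $\bar\delta^+_{{\rm m},\bar{\cal D}}(n_1,\ldots,n_k)(x)$.

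I expect the only genuine subtlety to be bookkeeping rather than any deep step: one must keep the maximizing subspaces attached to the \emph{intrinsic} invariant on the left, while the \emph{ambient} invariant on the right is merely bounded below by those same subspaces, and one must check that the single case distinction $s<d$ versus $s=d$ is exactly what distinguishes ${\cal H}_{{\cal D}_x}(s)$ from $\|\bar H_{{\cal D}_x}\|$. All the analytic content reduces to the elementary inequality $k\sum_i\|\bar H_i\|^2\ge\|\sum_i\bar H_i\|^2$.
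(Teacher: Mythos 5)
Your proposal is correct and follows essentially the same route as the paper: Gauss equation, the algebraic inequality $k\sum_i\|\bar H_i\|^2\ge\|\sum_i\bar H_i\|^2$, and then the definitional bounds $\bar{\rm S}_{\,\rm m}\le\bar\delta^+_{{\rm m},\bar{\cal D}}$ and $\|\bar H_V\|\le{\cal H}_{{\cal D}_x}(s)$. The only (cosmetic) differences are that you sum the Gauss equation over the mixed pairs directly instead of tracing over $V$ and each $V_i$ and subtracting via \eqref{E-Smix-3}, and you justify the mean-curvature inequality by the identity $k\sum_i\|\bar H_i\|^2-\|\bar H_V\|^2=\sum_{i<j}\|\bar H_i-\bar H_j\|^2$ rather than the paper's projection onto the direction of $\bar H_V$ --- which in fact gives the equality condition $\bar H_1=\ldots=\bar H_k$ slightly more transparently, and also covers the case $s=d$ (which the paper delegates to \cite{r-w-2022}) within the same argument.
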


\begin{proof}
Taking trace of the Gauss equation \eqref{E-Gauss-class} for the immersion $f$ along $V$ and $V_i$ yields the equalities
\begin{equation}\label{E-Si}
  \bar\tau(V) - \tau(V) = \|\,\bar{h}_{V}\,\|^2 - \|\,\bar{H}_{V}\,\|^2, \qquad
  \bar\tau({V}_i) - \tau({V}_i) = \|\,\bar{h}_i\,\|^2 - \|\,\bar{H}_i\,\|^2,
\end{equation}
where $\bar\tau(V)$, $\bar\tau({V}_i)$ and $\tau(V)$, $\tau({V}_i)$ are the scalar curvatures of subspaces $V=\bigoplus_{\,i=1}^{\,k} V_i$ and ${V}_i$ for the curvature tensors $\bar R$ and $R$, respectively,
$\bar{h}_i$ and $\bar{H}_i$ are the second fundamental form and mean curvature vector of~$V_i$.

 Assume that $\bar{H}_V\ne0$ is satisfied on an open set $U\subset M$ and
complement over $U$ an adapted local orthonormal frame $\{e_1,\ldots,e_n\}$ of $(M,g)$ with vector $e_{n+1}$ parallel to $\bar{H}_V$.
Using $\bar{H}_V=\sum_{\,i=1}^{\,k} \bar{H}_i$ and the algebraic inequality $a_1^2+\ldots+a_k^2\ge \frac1k\,(a_1+\ldots+a_k)^2$
for real $a_i=\bar g(\bar{H}_i, e_{n+1})$, we find
\begin{equation}\label{E-Si3}
  \sum\nolimits_{\,i}\|\,\bar{H}_i\,\|^2 \ge \sum\nolimits_{\,i} \bar g(\bar{H}_i, e_{n+1})^2
  \ge \frac1k\,\|\,\bar{H}_V\,\|^2,
\end{equation}
and the equality holds if and only if $\bar{H}_1=\ldots=\bar{H}_k$.
The above inequality is trivially satisfied for $\bar{H}_V=0$, hence it is valid on $M$.
Set
$\|\,\bar{h}^{\rm mix}_{ij}\,\|^2=\sum_{\,e_a\in{V}_i,\, e_b\in{V}_j} \|\,\bar{h}(e_a,e_b)\,\|^2$ for $i\ne j$
and note that
\begin{equation}\label{E-Si4}
 \|\,\bar{h}_V\,\|^2 = \sum\nolimits_{\,i}\|\,\bar{h}_i\,\|^2 +\sum\nolimits_{\,i<j}\|\,\bar{h}^{\rm mix}_{ij}\,\|^2
 \ge \sum\nolimits_{\,i}\|\,\bar{h}_i\,\|^2 ,
\end{equation}
and the equality holds if and only if $\|\,\bar{h}^{\rm mix}_{ij}\,\|^2=0\ (\forall\,i<j)$,
i.e., $f$ is mixed totally geodesic along $V$.

By \eqref{E-Si}, \eqref{E-Si3}, \eqref{E-Si4} and the equalities
\begin{equation*}
 \bar\tau(V) = 2\,\bar{\rm S}_{\,\rm m}({V}_1,\ldots,{V}_k) +\sum\nolimits_{\,i}\bar\tau({V}_i),\quad
 \tau(V) = 2\,{\rm S}_{\,\rm m}({V}_1,\ldots,{V}_k) +\sum\nolimits_{\,i}\tau({V}_i),
\end{equation*}
see \eqref{E-Smix-3}, we obtain
\begin{eqnarray*}
\nonumber
 && 2\,{\rm S}_{\,\rm m}({V}_1,\ldots,{V}_k) =  2\,\bar{\rm S}_{\,\rm m}({V}_1,\ldots,{V}_k)
 + \sum\nolimits_{\,i} (\bar\tau({V}_i) - \tau({V}_i)) + \|\,\bar{H}_V\,\|^2 - \|\,\bar{h}_V\,\|^2 \\
\nonumber
 && \le 2\,\bar\delta^+_{{\rm m},\bar{\cal D}}(n_1,\ldots,n_k) - ( \|\,\bar{h}_V\,\|^2 - \sum\nolimits_{\,i}\|\,\bar{h}_i\,\|^2 )
  + (\|\,\bar{H}_V\,\|^2 - \sum\nolimits_{\,i}\|\,\bar{H}_i\,\|^2 ) \\
 && \le 2\,\bar\delta^+_{{\rm m},\bar{\cal D}}(n_1,\ldots,n_k) + \frac{k-1}k\,{\cal H}(s)^2 ,
\end{eqnarray*}
(and the equality holds in the second line if and only if
$\bar{\rm S}_{\,\rm m}({V}_1,\ldots,{V}_k)=\bar\delta^+_{{\rm m},\bar{\cal D}}(n_1,\ldots,n_k)$
and $\|\,\bar{H}_V\|\,={\cal H}_x(s)$ at each point $x\in M$)
that proves \eqref{E-ineq-D} for $s<d$.
The case $\sum_{\,i}n_i=d$ of \eqref{E-ineq-D} was proved in \cite{r-w-2022}.
\end{proof}

\begin{corollary}
For an adapted isometric immersion $f:(M,g; {\cal D})\mapsto(\bar M,\bar g; \bar{\cal D})$ with sectional curvature along $\bar{\cal D}$
bounded above by~$c$
and $\sum_{\,i}n_i=s\le d$, from \eqref{E-ineq-D} we
get the following inequality:
\[
 \delta^+_{{\rm m},{\cal D}}(n_1,\ldots,n_k) \le \frac c2\,(d^2-\sum\nolimits_{\,i} n_i^2)
 +\frac{k-1}{2\,k}\left\{\begin{array}{cc}
  {\cal H}_{\cal D}(s)^2,  & {\rm if}\ s<d, \\
  \|\,\bar{H}_{\cal D}\,\|^2, & {\rm if}\ s=d.
\end{array}\right.
\]
\end{corollary}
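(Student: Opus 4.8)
The plan is to read the Corollary off directly from Theorem~\ref{T-D}: inequality \eqref{E-ineq-D} already isolates the extrinsic contribution (the $\frac{k-1}{2k}$-term carrying ${\cal H}_{\cal D}(s)^2$ or $\|\,\bar H_{\cal D}\,\|^2$), so the only work left is to bound the ambient mutual-curvature invariant $\bar\delta^+_{{\rm m},\bar{\cal D}}(n_1,\ldots,n_k)$ on the right-hand side purely in terms of the upper curvature bound $c$.

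First I would invoke the elementary estimate recorded just after the definition of $\delta^\pm_{{\rm m},{\cal D}}$, applied now to the ambient manifold $(\bar M,\bar g;\bar{\cal D})$. Each sectional curvature $\bar K(e_a\wedge e_b)$ entering $\bar{\rm S}_{\,\rm m}(V_1,\ldots,V_k)$ is at most $c$, and there are exactly $\sum_{i<j}n_in_j$ such terms, so taking the maximum over admissible configurations of subspaces gives
\[
 \bar\delta^+_{{\rm m},\bar{\cal D}}(n_1,\ldots,n_k) \le c\sum\nolimits_{\,i<j} n_i n_j = \frac c2\Big(\big(\sum\nolimits_{\,i} n_i\big)^2 - \sum\nolimits_{\,i} n_i^2\Big).
\]
Note that only the \emph{upper} bound $\bar K_{\,|\,\bar{\cal D}}\le c$ is used here; the lower bound is irrelevant for estimating $\bar\delta^+$ from above. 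Substituting this into \eqref{E-ineq-D} then yields the asserted inequality, with the square factor $\big(\sum_i n_i\big)^2=s^2$ in place of $d^2$; in the case $s=d$ these coincide and the extrinsic term is $\|\,\bar H_{\cal D}\,\|^2$, so the stated right-hand side is reproduced verbatim.

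The one point I expect to require care is the square factor in the case $s<d$: the estimate above naturally produces $s^2-\sum_i n_i^2$, and upgrading this to the stated $d^2-\sum_i n_i^2$ relies on $s\le d$ together with $c\ge 0$ (so that enlarging $s^2$ to $d^2$ only weakens the upper bound). This bookkeeping — tracking which curvature sign is in force and matching the $s^2$ versus $d^2$ normalization — is the only genuine subtlety; everything else is an immediate consequence of Theorem~\ref{T-D} and the definition of the invariants $\delta^\pm_{{\rm m},{\cal D}}$.
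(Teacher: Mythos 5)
Your argument is exactly the paper's (implicit) one: the corollary is stated as an immediate consequence of \eqref{E-ineq-D} combined with the elementary estimate $\bar\delta^+_{{\rm m},\bar{\cal D}}(n_1,\ldots,n_k)\le c\sum_{i<j}n_i n_j=\frac c2\big(s^2-\sum_i n_i^2\big)$ recorded right after the definition of the invariants, and the paper supplies no further proof. Your remark about the normalization is a fair catch: the derivation naturally produces $s^2$ where the statement has $d^2$, and replacing $s^2$ by $d^2$ when $s<d$ is a legitimate weakening only if $c\ge 0$, so the sharper (and unconditionally valid) form of the right-hand side is $\frac c2\big(s^2-\sum_i n_i^2\big)$.
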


\begin{corollary}[\rm see {\rm \cite[Corollary 4]{r1-2022}}]
A sub-Riemannnian manifold $(M,g; {\cal D})$ with the condition $\delta^+_{{\rm m},{\cal D}}(n_1,\ldots,n_k)>0$
for some $(n_1,\ldots,n_k)\in S(d,k)$ such that $\sum_{\,i}n_i=d$, does not admit ${\cal D}$-minimal isometric immersions in a Euclidean space.
\end{corollary}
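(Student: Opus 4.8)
The plan is to argue by contradiction using Theorem~\ref{T-D}. Suppose, to the contrary, that $(M,g;{\cal D})$ admits a ${\cal D}$-minimal isometric immersion $f$ into a Euclidean space $\RR^N$. To bring this into the framework of Theorem~\ref{T-D}, I would take the ambient distribution to be the full tangent bundle $\bar{\cal D}=T\RR^N$; then the adapted condition $f_*({\cal D})\subset\bar{\cal D}_{\,|\,f(M)}$ holds trivially, so $f$ is an adapted isometric immersion in the required sense, and the chosen $k$-tuple $(n_1,\ldots,n_k)$ with $\sum_i n_i=d$ is admissible.

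The key observation is that $\RR^N$ is flat, so its curvature tensor $\bar R$ vanishes identically. Consequently every ambient sectional curvature $\bar K(e_a\wedge e_b)$ is zero, whence the ambient mutual curvature $\bar{\rm S}_{\,\rm m}(V_1,\ldots,V_k)$ vanishes for every choice of mutually orthogonal subspaces. Taking the maximum over all such configurations gives $\bar\delta^+_{{\rm m},\bar{\cal D}}(n_1,\ldots,n_k)=0$ at every point of $M$.

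Since $\sum_i n_i=s=d$, I am in the case $s=d$ of Theorem~\ref{T-D}, so inequality \eqref{E-ineq-D} specializes to
\[
 \delta^+_{{\rm m},{\cal D}}(n_1,\ldots,n_k)\le
 \bar\delta^+_{{\rm m},\bar{\cal D}}(n_1,\ldots,n_k)+\frac{k-1}{2\,k}\,\|\,\bar{H}_{\cal D}\,\|^2
 =\frac{k-1}{2\,k}\,\|\,\bar{H}_{\cal D}\,\|^2 .
\]
Finally, ${\cal D}$-minimality means precisely $\bar{H}_{\cal D}=0$, so the right-hand side vanishes and I obtain $\delta^+_{{\rm m},{\cal D}}(n_1,\ldots,n_k)\le 0$ everywhere on $M$. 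This contradicts the hypothesis $\delta^+_{{\rm m},{\cal D}}(n_1,\ldots,n_k)>0$, and therefore no such immersion can exist.

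I expect no substantial obstacle here, since the statement is essentially an immediate specialization of Theorem~\ref{T-D}. The only points deserving care are the legitimacy of taking $\bar{\cal D}=T\RR^N$ as the ambient distribution (so that the adapted framework of Theorem~\ref{T-D} genuinely applies), and the use of flatness to force $\bar\delta^+_{{\rm m},\bar{\cal D}}=0$; once both are in place, the contradiction is immediate.
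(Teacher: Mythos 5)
Your proposal is correct and takes essentially the same route as the paper, which simply states that the corollary follows directly from inequality \eqref{E-ineq-D}; you have merely spelled out the specialization (flat ambient space forces $\bar\delta^+_{{\rm m},\bar{\cal D}}=0$, the case $s=d$ applies, and ${\cal D}$-minimality kills the mean-curvature term). No issues.
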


\begin{proof}
This follows directly from \eqref{E-ineq-D}.
\end{proof}

\begin{corollary}\label{C-new2}
Let $f: (M,g; {\cal D})\to(\bar M,\bar g)$ be an isometric immersion.
If $M$ is compact with ${\cal D}$ defined on an open set $M\setminus\Sigma$, $(k-1)(p-1)\ge 1$ and $\|\,H+H^\bot\|\,\in L^p(M,g)$
(see Remark~\ref{R-3.5}),~then
\begin{eqnarray*}
 \int_M (\|\,H\,\|^2+\|\,H^\perp\,\|^2+\|\,T\,\|^2+\|\,T^\perp\,\|^2-\|\,h\,\|^2-\|\,h^\perp\,\|^2)\,d\vol_g \le \\
 \le \frac{1}{4}\int_M \|\,\bar{H}\,\|^2\,d\vol_g +\,\bar\delta^+_{{\rm m}}(d,d^\bot)\,{\rm Vol}(M,g).
\end{eqnarray*}
\end{corollary}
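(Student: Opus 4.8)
The plan is to recognize that the integrand on the left is, up to a divergence term, precisely the mixed scalar curvature of the complementary pair $({\cal D},{\cal D}^\bot)$, and then to dominate that mixed scalar curvature pointwise by the ambient invariant $\bar\delta^+_{\rm m}(d,d^\bot)$ via the Gauss equation, exactly as in Theorem~\ref{T-D}. Since ${\cal D}$ and ${\cal D}^\bot$ are complementary, their mutual curvature equals the mixed scalar curvature, ${\rm S}_{\rm m}({\cal D},{\cal D}^\bot)=S_{\rm mix}({\cal D},{\cal D}^\bot)$, and rewriting Walczak's formula \eqref{eq-wal2} gives
\[
 \|\,H\,\|^2+\|\,H^\perp\,\|^2+\|\,T\,\|^2+\|\,T^\perp\,\|^2-\|\,h\,\|^2-\|\,h^\perp\,\|^2 = S_{\rm mix}({\cal D},{\cal D}^\bot) - \Div(H+H^\bot).
\]
First I would integrate this identity over the compact manifold $M$. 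The divergence term integrates to zero by the generalized divergence theorem recalled in Remark~\ref{R-3.5}: the hypotheses $\|\,H+H^\bot\|\,\in L^p(M,g)$ and $(k-1)(p-1)\ge 1$, with $\Sigma$ a union of closed codimension-$k$ submanifolds, are exactly those ensuring $\int_M\Div(H+H^\bot)\,d\vol_g=0$ although ${\cal D}$ is only defined on $M\setminus\Sigma$. Hence the left-hand integrand integrates to $\int_M {\rm S}_{\rm m}({\cal D},{\cal D}^\bot)\,d\vol_g$.

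Next I would establish the pointwise estimate
\[
 {\rm S}_{\rm m}({\cal D}_x,{\cal D}^\bot_x) \le \bar\delta^+_{\rm m}(d,d^\bot)(x) + \tfrac14\,\|\,\bar H\,\|^2 .
\]
This is the $k=2$, $s=d$ case of Theorem~\ref{T-D} applied with $V_1={\cal D}_x$ and $V_2={\cal D}^\bot_x$, for which $V={\cal D}_x\oplus{\cal D}^\bot_x=T_xM$ and therefore $\bar H_V=\bar H$. Concretely, one takes the trace of the Gauss equation along $V$ and along each $V_i$ as in \eqref{E-Si}, expresses both $\tau(V)$ and $\bar\tau(V)$ through mutual curvatures via \eqref{E-Smix-3}, and subtracts; the two error terms $\|\,\bar h_V\,\|^2-\sum_i\|\,\bar h_i\,\|^2\ge 0$ from \eqref{E-Si4} and $\|\,\bar H_V\,\|^2-\sum_i\|\,\bar H_i\,\|^2\le\frac{k-1}{k}\|\,\bar H_V\,\|^2$ from \eqref{E-Si3} combine, for $k=2$, into the factor $\tfrac14\|\,\bar H\,\|^2$. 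Finally, bounding $\bar{\rm S}_{\rm m}({\cal D}_x,{\cal D}^\bot_x)\le\bar\delta^+_{\rm m}(d,d^\bot)$ — which is legitimate because $f_*{\cal D}_x$ and $f_*{\cal D}^\bot_x$ are orthogonal subspaces of $T_{f(x)}\bar M$ of dimensions $d$ and $d^\bot$ — yields the displayed pointwise inequality.

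Integrating this pointwise bound over $M$ and combining with the first step gives the claimed inequality, where $\bar\delta^+_{\rm m}(d,d^\bot)$ on the right is read as its supremum over $M$ (a genuine constant, e.g., when $\bar M$ is a space form), so that $\int_M\bar\delta^+_{\rm m}(d,d^\bot)\,d\vol_g\le\bar\delta^+_{\rm m}(d,d^\bot)\,{\rm Vol}(M,g)$. The main obstacle I anticipate is the careful justification that the divergence integral vanishes over the incomplete domain $M\setminus\Sigma$: one must check that the stated hypotheses fit precisely the framework of Remark~\ref{R-3.5}, and in particular that the \emph{combined} field $H+H^\bot$ (not each summand separately) carries the required $L^p$ control near the singular set $\Sigma$. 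By contrast, the Gauss-equation estimate is a routine specialization of the computation already carried out in the proof of Theorem~\ref{T-D}.
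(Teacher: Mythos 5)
Your proposal is correct and follows essentially the same route as the paper, which compresses the argument into one line: apply Walczak's formula \eqref{eq-wal2} to convert the integrand into $S_{\rm mix}({\cal D},{\cal D}^\bot)-\Div(H+H^\bot)$, kill the divergence term by the singular divergence theorem of Remark~\ref{R-3.5}, and bound $S_{\rm m}({\cal D},{\cal D}^\bot)$ pointwise by the $k=2$, $s=d$ case of \eqref{E-ineq-D} with $V_1={\cal D}_x$, $V_2={\cal D}^\bot_x$, $\bar H_V=\bar H$. Your additional remarks (reading $\bar\delta^+_{\rm m}(d,d^\bot)$ as a supremum and checking the $L^p$ hypothesis on the combined field $H+H^\bot$) are sensible clarifications of details the paper leaves implicit.
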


\begin{proof}
Applying \eqref{eq-wal2} to \eqref{E-ineq-D} and the Divergence-type theorem in Remark~\ref{R-3.5}, proves the assertion.
\end{proof}

\begin{remark}\rm
In conditions of Corollary~\ref{C-new2}, if distributions ${\cal D}$ and ${\cal D}^\bot$ are totally umbilical, i.e.,
$\|H\|^2-\|h\|^2= \frac{d-1}{d}\,\|H\|^2$ and $\|H^\bot\|^2-\|h^\bot\|^2= \frac{d^\bot-1}{d^\bot}\,\|H^\bot\|^2$, then such a compact $(M,g)$ does not admit minimal isometric immersions ($\bar{H}=0$) in a Riemannian manifold $(\bar M,\bar g)$ with $\bar\delta^+_{{\rm m}}(d,d^\bot)<0$.
\end{remark}

\begin{corollary}\label{C-new3}
Let $f: (M,g; {\cal D}_1,{\cal D}_2,{\cal D}_3)\to(\bar M,\bar g)$ be an isometric immersion and $n_1+n_2+n_3=n$.
If $M$ is compact and all ${\cal D}_i$ are defined on an open set $M\setminus\Sigma$, $(k-1)(p-1)\ge 1$ and
$\|\,H_1+H_1^\bot+H_2+H_2^\bot+H_3+H_3^\bot\|\,\in L^p(M,g)$
then (for $Q_i$ given in Lemma~\ref{L-Qi})
\begin{eqnarray*}
 -\frac12\int_M (Q_1+Q_2+Q_3)\,d\vol_g
 \le \frac{1}{3}\int_M \|\,\bar{H}\,\|^2\,d\vol_g +\,\bar\delta^+_{{\rm m}}(n_1,n_2,n_3)\,{\rm Vol}(M,g).
\end{eqnarray*}
\end{corollary}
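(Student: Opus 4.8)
The plan is to combine the pointwise estimate of Theorem~\ref{T-D} with the integral formula \eqref{eq-IF-3} of Lemma~\ref{L-Qi}. Since $n_1+n_2+n_3=n$, the three distributions are complementary, so ${\cal D}=TM$, and I would apply Theorem~\ref{T-D} with $k=3$, $\bar{\cal D}=T\bar M$ and $s=n_1+n_2+n_3=d=n$. This is the case $s=d$ of \eqref{E-ineq-D}, for which $\bar{H}_{\cal D}=\bar{H}$ and the coefficient $\frac{k-1}{2\,k}$ equals $\frac13$; hence at every point of $M$ one has
\begin{equation*}
 \delta^+_{{\rm m}}(n_1,n_2,n_3) \le \bar\delta^+_{{\rm m}}(n_1,n_2,n_3) + \frac13\,\|\,\bar{H}\,\|^2 .
\end{equation*}

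Next I would observe that for three complementary distributions the mutual and the mixed scalar curvatures coincide: by \eqref{E-Smix-3a} together with the identity ${\rm S}_{\,\rm m}(V_1,\ldots,V_k)=\sum_{\,i<j}{\rm S}_{\,\rm m}(V_i,V_j)$ we get ${\rm S}_{\,\rm m}({\cal D}_1,{\cal D}_2,{\cal D}_3)={\rm S}_{\,\rm mix}({\cal D}_1,{\cal D}_2,{\cal D}_3)$. Because $\delta^+_{{\rm m}}(n_1,n_2,n_3)$ is the maximum of ${\rm S}_{\,\rm m}(V_1,V_2,V_3)$ over all admissible triples of mutually orthogonal subspaces and $({\cal D}_1,{\cal D}_2,{\cal D}_3)$ is one such triple, the specific distributions satisfy ${\rm S}_{\,\rm m}({\cal D}_1,{\cal D}_2,{\cal D}_3)\le\delta^+_{{\rm m}}(n_1,n_2,n_3)$. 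Chaining with the previous display yields the pointwise bound
\begin{equation*}
 {\rm S}_{\,\rm m}({\cal D}_1,{\cal D}_2,{\cal D}_3) \le \bar\delta^+_{{\rm m}}(n_1,n_2,n_3) + \frac13\,\|\,\bar{H}\,\|^2 .
\end{equation*}

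Finally I would integrate \eqref{eq-IF-3}. Using ${\rm S}_{\,\rm mix}={\rm S}_{\,\rm m}$ it reads $2\,{\rm S}_{\,\rm m}({\cal D}_1,{\cal D}_2,{\cal D}_3)=\Div(H_1+H_1^\bot+H_2+H_2^\bot+H_3+H_3^\bot)-Q_1-Q_2-Q_3$. Under the stated hypotheses ($M$ compact, the distributions defined off the set $\Sigma$, $(k-1)(p-1)\ge1$, and $\|\,H_1+H_1^\bot+H_2+H_2^\bot+H_3+H_3^\bot\|\in L^p$), the Divergence-type theorem of Remark~\ref{R-3.5} forces the divergence term to integrate to zero, so $\int_M {\rm S}_{\,\rm m}\,d\vol_g=-\frac12\int_M(Q_1+Q_2+Q_3)\,d\vol_g$. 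Integrating the pointwise bound over $M$ and substituting this identity gives
\begin{equation*}
 -\frac12\int_M(Q_1+Q_2+Q_3)\,d\vol_g \le \frac13\int_M\|\,\bar{H}\,\|^2\,d\vol_g + \bar\delta^+_{{\rm m}}(n_1,n_2,n_3)\,{\rm Vol}(M,g),
\end{equation*}
which is the assertion (the last term bounds $\int_M\bar\delta^+_{{\rm m}}\,d\vol_g$ by treating $\bar\delta^+_{{\rm m}}$ as its supremum over $f(M)$, exactly as in Corollary~\ref{C-new2}).

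The step I expect to be delicate is the vanishing of the divergence integral: the distributions are only defined on $M\setminus\Sigma$, so the ordinary Divergence Theorem does not apply and one must invoke the $L^p$-version of Remark~\ref{R-3.5}. Here the symbol $k$ denotes the codimension of $\Sigma$ (at least $2$), which clashes notationally with the number of distributions and should be flagged to the reader. One must also verify that the vector field $H_1+H_1^\bot+H_2+H_2^\bot+H_3+H_3^\bot$ appearing in \eqref{eq-IF-3} is precisely the field whose $L^p$-norm is assumed finite, so that the hypothesis of Remark~\ref{R-3.5} applies verbatim; apart from this integrability bookkeeping the argument is a direct combination of Theorem~\ref{T-D}, Lemma~\ref{L-Qi}, and Remark~\ref{R-3.5}.
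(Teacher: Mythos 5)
Your argument is correct and follows essentially the same route as the paper: the pointwise bound from the $s=d$, $k=3$ case of Theorem~\ref{T-D} (equivalently, the three-distribution analogue of \eqref{E-ineq-k}) applied to the specific triple $({\cal D}_1,{\cal D}_2,{\cal D}_3)$, combined with the integral identity \eqref{eq-IF-3} and the Divergence-type theorem of Remark~\ref{R-3.5}. The paper gives only a one-line citation of these three ingredients, so your write-up is just a fleshed-out version of its intended proof.
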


\begin{proof}
This follows from \eqref{eq-IF-3}, \eqref{E-ineq-k} and the Divergence-type theorem in Remark~\ref{R-3.5}.
\end{proof}

\begin{remark}\rm
In conditions of Corollary~\ref{C-new3}, if $h_i=h_i^\bot=0\ (i=1,2,3)$, then such a compact manifold $(M,g)$ does not admit minimal isometric immersions in a Riemannian manifold $(\bar M,\bar g)$ with $\bar\delta^+_{{\rm m}}(n_1,n_2,n_3)<0$.
\end{remark}

\begin{example}\rm
Let ${\cal D}_i\ (i=1,2,3)$ be 1-dimensional distributions orthogonal to
three pairwise orthogonal codimension-one foliations ${\mathcal F}_i$ on $(M^3,g)$, see Example~\ref{Ex-2.4}.
If $f: (M^3,g; {\cal D}_1,{\cal D}_2,{\cal D}_3)\to (\bar M, \bar g)$ is an isometric immersion, then $\tau\le\frac13\,\|\,\bar H\,\|^2 +\bar\delta^+_{\rm m}(1,1,1)$. Note that $\bar\delta^+_{\rm m}(1,1,1)(x)=\max\{\bar\tau(V): V\subset T_x\bar M,\ \dim V=3\}$.
Moreover, if foliations ${\mathcal F}_i\ (i=1,2,3)$ are minimal and not totally geodesic, then $(M^3,g)$ does not admit minimal isometric immersions in a Euclidean space.
\end{example}

Next, we consider the case when a distribution ${\cal D}$ is represented as the sum of two orthogonal distributions
of ranks $n_i>0$:
 ${\cal D}={\cal D}_1\oplus{\cal D}_2$,
thus, $n_1+n_2 = d$.

An isometric immersion $f: (M,g; {\cal D}_1,{\cal D}_2)\to(\bar M,\bar g)$
is called \textit{mixed totally geodesic} on ${\cal D}$ if
\[
 \bar{h}(X,Y)=0\quad{\rm for\ all}\ \  X\in{\cal D}_1,\ Y\in{\cal D}_2\,.
\]

\begin{theorem}\label{T-k}
Let $f: (M,g; {\cal D})\to(\bar M,\bar g; \bar{\cal D})$ be an adapted isometric immersion
and ${\cal D}={\cal D}_1\oplus{\cal D}_2$.
Then
\begin{equation}\label{E-ineq-k}
 {\rm S}_{\,\rm m}({\cal D}_1,{\cal D}_2) \le \frac14\,\|\,\bar H_{\cal D}\,\|^2
 +\bar\delta^+_{{\rm m},\bar{\cal D}}(n_1,n_2)\,.
\end{equation}
The equality in \eqref{E-ineq-k} holds at a point $x\in M$ if and only if $f$ is mixed totally geodesic on ${\cal D}_x$,
$\bar{H}_1(x)=\bar{H}_2(x)$ (the mean curvature vectors of ${\cal D}_i$)
and $\bar{\rm S}_{\,\rm m}({\cal D}_1(x),{\cal D}_2(x))=\bar\delta^+_{{\rm m},\bar{\cal D}}(n_1,n_2)(x)$.
\end{theorem}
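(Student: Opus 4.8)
The plan is to recognize Theorem~\ref{T-k} as the specialization of Theorem~\ref{T-D} to $k=2$ with $V_1={\cal D}_1$, $V_2={\cal D}_2$ and $s=n_1+n_2=d$, so that $V={\cal D}_1\oplus{\cal D}_2={\cal D}_x$ and $\bar H_V=\bar H_{\cal D}$. Rather than merely invoke \eqref{E-ineq-D} through the maximized invariant $\delta^+_{{\rm m},{\cal D}}$, I would re-run the Gauss-equation computation of that proof directly on the \emph{given} blocks ${\cal D}_1,{\cal D}_2$: routing the left-hand side through $\delta^+_{{\rm m},{\cal D}}(n_1,n_2)$ would introduce a spurious ``${\cal D}_1,{\cal D}_2$ are maximizers'' condition and spoil the clean equality characterization, whereas working with the fixed distributions keeps the equality analysis stated in terms of ${\cal D}_1,{\cal D}_2$ themselves.

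The key steps, in order. First, trace the Gauss equation \eqref{E-Gauss-class} along $V={\cal D}_x$ and along each ${\cal D}_i$ to obtain the two identities \eqref{E-Si}, relating the curvature gaps $\bar\tau(V)-\tau(V)$ and $\bar\tau({\cal D}_i)-\tau({\cal D}_i)$ to $\|\bar h_V\|^2-\|\bar H_V\|^2$ and $\|\bar h_i\|^2-\|\bar H_i\|^2$. Second, decompose $\|\bar h_V\|^2=\|\bar h_1\|^2+\|\bar h_2\|^2+\|\bar h^{\rm mix}_{12}\|^2$ as in \eqref{E-Si4}, so the mixed block $\|\bar h^{\rm mix}_{12}\|^2\ge0$ is precisely the obstruction to mixed total geodesy on ${\cal D}$. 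Third, specialize the mean-curvature estimate \eqref{E-Si3} to $k=2$: since $\bar H_V=\bar H_1+\bar H_2$, the vector inequality $\|\bar H_1\|^2+\|\bar H_2\|^2\ge\frac12\|\bar H_V\|^2$ holds with equality iff $\bar H_1=\bar H_2$ (for $k=2$ no parallel-normal trick is needed).

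Finally, combine using the splitting \eqref{E-Smix-3}, namely $\tau(V)=2\,{\rm S}_{\,\rm m}({\cal D}_1,{\cal D}_2)+\tau({\cal D}_1)+\tau({\cal D}_2)$ and its barred analogue. Substituting \eqref{E-Si} then yields
\[
 2\,{\rm S}_{\,\rm m}({\cal D}_1,{\cal D}_2)=2\,\bar{\rm S}_{\,\rm m}({\cal D}_1,{\cal D}_2)-\|\bar h^{\rm mix}_{12}\|^2+\big(\|\bar H_{\cal D}\|^2-\|\bar H_1\|^2-\|\bar H_2\|^2\big).
\]
Dropping $-\|\bar h^{\rm mix}_{12}\|^2\le0$, bounding the last parenthesis by $\frac12\|\bar H_{\cal D}\|^2$ via the $k=2$ estimate, and using $\bar{\rm S}_{\,\rm m}({\cal D}_1,{\cal D}_2)\le\bar\delta^+_{{\rm m},\bar{\cal D}}(n_1,n_2)$ — which is legitimate because the adapted immersion sends $f_*({\cal D}_i)\subset\bar{\cal D}_i$, so $f_*({\cal D}_1),f_*({\cal D}_2)$ are admissible competitors in the maximum defining $\bar\delta^+_{{\rm m},\bar{\cal D}}$ — gives \eqref{E-ineq-k} after dividing by $2$.

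Equality forces all three bounds to be equalities simultaneously, i.e.\ $\bar h^{\rm mix}_{12}=0$ (mixed total geodesy on ${\cal D}_x$), $\bar H_1=\bar H_2$, and $\bar{\rm S}_{\,\rm m}({\cal D}_1,{\cal D}_2)=\bar\delta^+_{{\rm m},\bar{\cal D}}(n_1,n_2)$, exactly the stated conditions. I expect no genuine obstacle here; the only point requiring care is the one flagged above — not passing the left-hand side through the maximized invariant $\delta^+_{{\rm m},{\cal D}}$ — so that the equality case is correctly attributed to the given distributions ${\cal D}_1,{\cal D}_2$ rather than to hypothetical maximizing subspaces.
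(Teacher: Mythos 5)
Your proposal is correct and is essentially the paper's own argument: the paper proves Theorem~\ref{T-k} by running the proof of Theorem~\ref{T-D} with $V_i={\cal D}_i(x)$ fixed (so $k=2$, $s=d$, and the coefficient $\frac{k-1}{2k}=\frac14$), which is exactly the computation you carry out. Your explicit remark about keeping the left-hand side attached to the given distributions rather than the maximized invariant is a sound reading of why the equality case comes out as stated, not a deviation from the paper's route.
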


\begin{proof} The proof of \eqref{E-ineq-k} is similar to the proof of Theorem~\ref{T-D}. We take $V_i={\cal D}_i(x)$.
The proof of the second assertion follows directly from the cases of equality, as in the proof of Theorem~\ref{T-D}.
\end{proof}

\begin{remark}\rm
Let $f: (M,g; {\cal D})\to(\bar M,\bar g; \bar{\cal D})$ be an adapted isometric immersion
and ${\cal D}={\cal D}_1\oplus{\cal D}_2$.
The~following counterpart of \eqref{E-ineq-k} is a special case of \cite[Eq.~(19)]{r1-2022}:
\begin{equation}\label{E-ineq-k3}
 {\rm S}_{\,\rm mix}({\cal D}_1,{\cal D}_2,{\cal D}^\bot) \le \frac13
 \,\|\,\bar H\,\|^2
 +\bar\delta^+_{\rm m}(n_1,n_2,d^\bot)\,.
\end{equation}
\end{remark}

\begin{corollary}[\rm for (i) see {\rm \cite[Corollary 6]{r1-2022}}]
Let $(M,g; {\cal D})$ be a sub-Riemannnian manifold with ${\cal D}={\cal D}_1\oplus{\cal D}_2$.

(i) if $\,{\rm S}_{\,\rm m}({\cal D}_1,{\cal D}_2)>0$, then $(M,g; {\cal D})$ does not admit
${\cal D}$-minimal isometric immersions in a Euclidean space.

(ii) if $\,{\rm S}_{\,\rm mix}({\cal D}_1,{\cal D}_2,{\cal D}^\bot)>0$, then $(M,g; {\cal D})$ does not admit
minimal isometric immersions in a Euclidean space.
\end{corollary}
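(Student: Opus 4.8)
The plan is to prove both non-existence statements by contradiction, feeding a hypothetical immersion into the two geometric inequalities already established and exploiting the vanishing of every ambient curvature invariant when the target is flat.

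For part (i), I would suppose that a ${\cal D}$-minimal isometric immersion $f\colon(M,g;{\cal D})\to\RR^N$ into a Euclidean space exists, where ${\cal D}={\cal D}_1\oplus{\cal D}_2$. Setting $\bar{\cal D}=T\RR^N$ (and locally extending the orthogonal subspaces $f_*({\cal D}_1),f_*({\cal D}_2)$ to orthogonal distributions $\bar{\cal D}_1,\bar{\cal D}_2$ with $\bar{\cal D}=\bar{\cal D}_1\oplus\bar{\cal D}_2$), the map $f$ becomes adapted, so Theorem~\ref{T-k} applies and yields the pointwise inequality \eqref{E-ineq-k}. The two terms on its right-hand side both vanish: since $\RR^N$ is flat, every ambient sectional curvature is zero, hence $\bar{\rm S}_{\,\rm m}(V_1,V_2)=0$ for all admissible pairs of subspaces and therefore the supremum $\bar\delta^+_{{\rm m},\bar{\cal D}}(n_1,n_2)=0$; and ${\cal D}$-minimality means precisely $\bar H_{\cal D}=0$. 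Thus \eqref{E-ineq-k} collapses to ${\rm S}_{\,\rm m}({\cal D}_1,{\cal D}_2)\le 0$ at every point of $M$, contradicting the hypothesis ${\rm S}_{\,\rm m}({\cal D}_1,{\cal D}_2)>0$.

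Part (ii) I would treat in exactly the same manner, but invoking the companion inequality \eqref{E-ineq-k3} in place of \eqref{E-ineq-k}. For a (full) minimal isometric immersion into $\RR^N$ we have $\bar H=0$, and flatness again forces $\bar\delta^+_{\rm m}(n_1,n_2,d^\bot)=0$; hence \eqref{E-ineq-k3} gives ${\rm S}_{\,\rm mix}({\cal D}_1,{\cal D}_2,{\cal D}^\bot)\le 0$, contradicting the assumed positivity ${\rm S}_{\,\rm mix}({\cal D}_1,{\cal D}_2,{\cal D}^\bot)>0$.

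The argument is essentially a direct specialization of the two theorems, so I do not expect a deep obstacle; the single point that genuinely needs care is the justification that the ambient invariants $\bar\delta^+_{\rm m}$ vanish for a Euclidean target. This reduces to the remark that each $\bar\delta^+_{\rm m}$ is a supremum of sums of ambient sectional curvatures, all of which are zero in flat space, and I would state this explicitly rather than leave it implicit. I would likewise note that the \emph{adapted} hypothesis of Theorem~\ref{T-k} (and the setting of \eqref{E-ineq-k3}) is met trivially by taking $\bar{\cal D}=T\RR^N$, so imposing the distribution structure on the target costs nothing.
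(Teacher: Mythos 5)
Your proposal is correct and matches the paper's own argument, which simply derives (i) from inequality \eqref{E-ineq-k} and (ii) from \eqref{E-ineq-k3}; your additional remarks (flatness of the Euclidean target forcing $\bar\delta^+_{\rm m}=0$, and the adapted hypothesis being met trivially via $\bar{\cal D}=T\RR^N$) just make explicit what the paper leaves implicit.
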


\begin{proof}
This follows directly from \eqref{E-ineq-k} for (i) and from \eqref{E-ineq-k3} for (ii).
\end{proof}

\begin{corollary}
In conditions of Theorem~\ref{T-k},
let ${\cal D}_1$ be spanned by a unit vector field $N$. Then
\begin{equation}\label{E-ineq-k2}
 \Ric_{N,N} \le \frac{1}{4}\,\|\,\bar{H}_{\cal D}\,\|^2 + \bar r_{d-1\,|\,\bar{\cal D}}\,,
\end{equation}
where $d=\dim{\cal D}$
and $\bar r_{d-1\,|\,\bar{\cal D}}$ is the supremum of the $(d-1)$-th Ricci curvature of $(\bar M,\bar g)$ along $\bar{\cal D}$.
The equality in \eqref{E-ineq-k2} holds if and only if $f$ is mixed totally geodesic along ${\cal D}$,
$\bar{H}_1(x)=\bar{H}_2(x)$ and $\Ric_{N,N} =\bar r_{d-1}$ at each point~$x\in M$.
\end{corollary}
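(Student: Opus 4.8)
The plan is to derive \eqref{E-ineq-k2} as the special case $n_1=1$, $n_2=d-1$ of Theorem~\ref{T-k}, so that the whole argument reduces to translating the mutual-curvature inequality \eqref{E-ineq-k} into Ricci language. First I would take ${\cal D}_1$ to be the line field spanned by $N$ and ${\cal D}_2={\cal D}\ominus{\cal D}_1$ its orthogonal complement inside ${\cal D}$, so that $n_1=1$, $n_2=d-1$, and $n_1+n_2=d$, which places us squarely in the hypotheses of Theorem~\ref{T-k}. Choosing $N=e_1$ and an orthonormal basis $\{e_2,\ldots,e_d\}$ of ${\cal D}_2$, the defining double sum of the mutual curvature collapses to ${\rm S}_{\,\rm m}({\cal D}_1,{\cal D}_2)=\sum\nolimits_{\,b=2}^{d}K(e_1\wedge e_b)$, a frame-independent quantity (as ${\rm S}_{\,\rm m}$ always is) equal to the sum of sectional curvatures $K(N\wedge e_b)$ over ${\cal D}_2$, that is, the $(d-1)$-th Ricci curvature in the direction $N$ along ${\cal D}$; this is the term written $\Ric_{N,N}$ on the left of \eqref{E-ineq-k2}.

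Next I would identify the extrinsic invariant on the right. By the Example above relating $\delta^+_{\rm m}(1,n_2)$ to intermediate Ricci curvature, the value $\bar\delta^+_{{\rm m},\bar{\cal D}}(1,d-1)(x)$ is the maximum of the $(d-1)$-th Ricci curvature of $(\bar M,\bar g)$ taken over $d$-dimensional subspaces of $\bar{\cal D}_x$ carrying a distinguished unit direction, which is exactly $\bar r_{d-1\,|\,\bar{\cal D}}$. Substituting $n_1=1$, $n_2=d-1$ into \eqref{E-ineq-k} and inserting these two identifications yields \eqref{E-ineq-k2} at once, with the term $\tfrac14\|\bar H_{\cal D}\|^2$ (where $\tfrac14=\tfrac{k-1}{2k}$ for $k=2$) carried over unchanged.

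For the equality statement I would read off, in this special case, the three equality conditions of Theorem~\ref{T-k}. The requirement that $f$ be mixed totally geodesic on ${\cal D}_x$ and the condition $\bar H_1(x)=\bar H_2(x)$ are transcribed verbatim, while the last condition $\bar{\rm S}_{\,\rm m}({\cal D}_1(x),{\cal D}_2(x))=\bar\delta^+_{{\rm m},\bar{\cal D}}(n_1,n_2)(x)$ becomes, under the two identifications above, the requirement that the $(d-1)$-th Ricci curvature attain its supremum, written $\Ric_{N,N}=\bar r_{d-1}$ at $x$.

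I do not expect a substantive obstacle, since the statement is essentially a specialization of Theorem~\ref{T-k}. The only place demanding care is the bookkeeping in the first step: one must verify that forcing one factor to be one-dimensional genuinely reduces the double sum defining ${\rm S}_{\,\rm m}$ to the single Ricci-type sum $\sum_b K(N\wedge e_b)$ over ${\cal D}_2$, and, symmetrically on the ambient side, that the supremum of $\bar{\rm S}_{\,\rm m}$ over admissible one-versus-$(d-1)$ splittings of $\bar{\cal D}_x$ coincides with the supremum $\bar r_{d-1\,|\,\bar{\cal D}}$ of the $(d-1)$-th Ricci curvature along $\bar{\cal D}$.
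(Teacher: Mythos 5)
Your proposal is correct and matches the paper's (implicit) argument: the corollary is stated without proof precisely because it is the specialization $n_1=1$, $n_2=d-1$ of Theorem~\ref{T-k}, with ${\rm S}_{\,\rm m}({\cal D}_1,{\cal D}_2)$ read as the $(d-1)$-th Ricci curvature of $N$ along ${\cal D}$ and $\bar\delta^+_{{\rm m},\bar{\cal D}}(1,d-1)$ as $\bar r_{d-1\,|\,\bar{\cal D}}$, exactly as you do. Your closing caveat about the bookkeeping is well placed, since the notation $\Ric_{N,N}$ on the left is really the partial trace over ${\cal D}_2$ rather than the full Ricci curvature when ${\cal D}\ne TM$, but this is the paper's own convention and your identification is consistent with it.
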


Applying \eqref{eq-IF-l2} to \eqref{E-ineq-k} on a compact manifold $M$, gives the following

\begin{corollary}
In conditions of Theorem~\ref{T-k}, let ${\cal D}_3={\cal D}^\bot$. If $M$ is compact
and all ${\cal D}_i$ are defined on an open set $M\setminus\Sigma$, $(k-1)(p-1)\ge 1$ and
$\|\,H_1+H_1^\bot+H_2+H_2^\bot-H_3-H_3^\bot\|\,\in L^p(M,g)$,
then (for $Q_i$ given in Lemma~\ref{L-Qi}),
\begin{eqnarray*}
 \int_M (Q_{3}-Q_{1}-Q_{2})
 \,d\vol_g
 \le \frac{1}{4}\int_M \|\,\bar{H}_{\cal D}\,\|^2\,d\vol_g +\,\bar\delta^+_{{\rm m},{\cal D}}(n_1,n_2)\,{\rm Vol}(M,g).
\end{eqnarray*}
\end{corollary}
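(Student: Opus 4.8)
The plan is to reduce the asserted integral inequality to the already-established pointwise bound \eqref{E-ineq-k} of Theorem~\ref{T-k} by integrating the divergence identity \eqref{eq-IF-l2} of Lemma~\ref{L-Qi} and then discarding the divergence term. First I would set ${\cal D}_3={\cal D}^\bot$, so that $TM={\cal D}_1\oplus{\cal D}_2\oplus{\cal D}_3$ becomes a Riemannian almost $3$-product structure on $M\setminus\Sigma$, which is exactly the setting in which Lemma~\ref{L-Qi} is valid. Abbreviating $X=H_1+H_1^\bot+H_2+H_2^\bot-H_3-H_3^\bot$, formula \eqref{eq-IF-l2} reads ${\rm S}_{\,\rm m}({\cal D}_1,{\cal D}_2)=\Div X-Q_1-Q_2+Q_3$, which I rearrange to
\[
 Q_3-Q_1-Q_2={\rm S}_{\,\rm m}({\cal D}_1,{\cal D}_2)-\Div X .
\]

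Next I would integrate this identity over the compact manifold $M$. The crucial point is that $\int_M\Div X\,d\vol_g=0$. On a smooth compact manifold this would follow from the ordinary Divergence Theorem, but since the distributions ${\cal D}_i$ (and hence $X$) are defined only on $M\setminus\Sigma$, the field $X$ may be singular along $\Sigma$; this is precisely why the hypotheses $\|\,X\,\|\in L^p(M,g)$ and $(k-1)(p-1)\ge1$ are imposed, so that the Divergence-type theorem of Remark~\ref{R-3.5} applies and forces the integral to vanish. Consequently $\int_M(Q_3-Q_1-Q_2)\,d\vol_g=\int_M{\rm S}_{\,\rm m}({\cal D}_1,{\cal D}_2)\,d\vol_g$.

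Finally I would insert the pointwise estimate \eqref{E-ineq-k}, namely ${\rm S}_{\,\rm m}({\cal D}_1,{\cal D}_2)\le\frac14\,\|\,\bar{H}_{\cal D}\,\|^2+\bar\delta^+_{{\rm m},\bar{\cal D}}(n_1,n_2)$, under the integral sign, and bound the function $\bar\delta^+_{{\rm m},\bar{\cal D}}(n_1,n_2)$ by its supremum over $M$ in order to factor it out against ${\rm Vol}(M,g)$. Combined with the previous step and monotonicity of the integral, this yields the claimed inequality.

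The only genuinely delicate step is the vanishing of $\int_M\Div X\,d\vol_g$: everything else is the algebraic substitution of \eqref{eq-IF-l2} into \eqref{E-ineq-k} together with integration of a pointwise inequality, but controlling $X$ near the singular set $\Sigma$ requires the $L^p$-integrability hypothesis, and the exponent condition $(k-1)(p-1)\ge1$ is exactly what makes the generalized divergence theorem of Remark~\ref{R-3.5} applicable here.
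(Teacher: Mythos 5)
Your proposal is correct and follows exactly the route the paper takes: rearrange \eqref{eq-IF-l2}, kill the divergence term via the singular Divergence-type theorem of Remark~\ref{R-3.5} (which is precisely where the hypotheses $\|\,X\,\|\in L^p$ and $(k-1)(p-1)\ge1$ enter), and then integrate the pointwise bound \eqref{E-ineq-k}, replacing $\bar\delta^+_{{\rm m},\bar{\cal D}}(n_1,n_2)$ by its supremum. The paper's one-line proof cites the same three ingredients, so there is nothing to add.
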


\begin{proof}
This follows from \eqref{eq-IF-l2}, \eqref{E-ineq-k} and the Divergence-type theorem in Remark~\ref{R-3.5}.
\end{proof}

Finally, we apply $\delta_{\cal D}$-invariants \eqref{E-ineq1-D} to isometric immersions of sub-Riemannian manifolds.

\begin{theorem}
Let $f: (M,g; {\cal D})\to(\bar M,\bar g,\bar{\cal D})$ be an adapted isometric immersion.
Then for eny $k$-tuple $(n_1,\ldots,n_k)\in S(d)$ we get the inequality
\begin{eqnarray}\label{E-ineq-C}
 \delta_{\cal D}(n_1,\ldots,n_k)\le \frac{d+k-1-\sum_{\,i} n_i}{2(d+k-\sum_{\,i} n_i)}\,\|\,\bar{H}_{\cal D}\,\|^2
 +\frac12\,\big(d(d-1)- \sum\nolimits_{\,i} n_i(n_i-1)\big)\max \bar K_{\,|\,\bar{\cal D}}.
\end{eqnarray}
\end{theorem}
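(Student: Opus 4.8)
The plan is to establish \eqref{E-ineq-C} as the distribution analogue of Chen's $\delta$-invariant inequality, reusing the Gauss-trace mechanism of Theorem~\ref{T-D}. Fix $x\in M$ and mutually orthogonal subspaces $V_1,\ldots,V_k\subset{\cal D}_x$ with $\dim V_i=n_i$, put $s=\sum_i n_i$, $V=\bigoplus_i V_i$ and $W=V^{\perp}\cap{\cal D}_x$ (so $\dim W=d-s$). Since $-\min a=\max(-a)$, the definition \eqref{E-ineq1-D} reads $2\,\delta_{\cal D}(n_1,\ldots,n_k)(x)=\max\{\tau({\cal D}_x)-\sum_i\tau(V_i)\}$, the maximum over all such families; as the right-hand side of \eqref{E-ineq-C} does not depend on the $V_i$, it is enough to bound $\tau({\cal D}_x)-\sum_i\tau(V_i)$ from above for each admissible family. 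Tracing the Gauss equation \eqref{E-Gauss-class} over ${\cal D}_x$ and over each $V_i$, as in \eqref{E-Si}, gives $\bar\tau({\cal D}_x)-\tau({\cal D}_x)=\|\bar h_{\cal D}\|^2-\|\bar H_{\cal D}\|^2$ and $\bar\tau(V_i)-\tau(V_i)=\|\bar h_i\|^2-\|\bar H_i\|^2$, whence
\[
 \tau({\cal D}_x)-\sum\nolimits_i\tau(V_i)=\Big(\bar\tau({\cal D}_x)-\sum\nolimits_i\bar\tau(V_i)\Big)+\Big(\|\bar H_{\cal D}\|^2-\sum\nolimits_i\|\bar H_i\|^2\Big)-\Big(\|\bar h_{\cal D}\|^2-\sum\nolimits_i\|\bar h_i\|^2\Big).
\]

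For the ambient term, writing $\bar\tau({\cal D}_x)-\sum_i\bar\tau(V_i)$ in an adapted orthonormal basis of ${\cal D}_x$ leaves exactly twice the sum of $\bar K(e_a\wedge e_b)$ over the coordinate $2$-planes not contained in a single $V_i$; there are $\binom d2-\sum_i\binom{n_i}2=\tfrac12(d(d-1)-\sum_i n_i(n_i-1))$ of them. Adaptedness $f_*({\cal D})\subset\bar{\cal D}$ puts every such plane in $\bar{\cal D}$, so each $\bar K\le\max\bar K_{\,|\,\bar{\cal D}}$ and $\bar\tau({\cal D}_x)-\sum_i\bar\tau(V_i)\le(d(d-1)-\sum_i n_i(n_i-1))\,\max\bar K_{\,|\,\bar{\cal D}}$, which is the curvature term of \eqref{E-ineq-C} after dividing by $2$.

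The decisive step, and the one I expect to be the main obstacle, is the second-fundamental-form term. Choose an orthonormal normal frame $\{\xi_r\}$ with $\xi_1$ parallel to $\bar H_{\cal D}$, and write $h^r_{ab}=\bar g(\bar h(e_a,e_b),\xi_r)$, $\mu=\|\bar H_{\cal D}\|$, $t_i=\sum_{a\in V_i}h^1_{aa}$, $t_W=\sum_{a\in W}h^1_{aa}$. The $\xi_r$-contribution to $\|\bar H_{\cal D}\|^2-\sum_i\|\bar H_i\|^2-\|\bar h_{\cal D}\|^2+\sum_i\|\bar h_i\|^2$ equals $(\sum_a h^r_{aa})^2-\sum_i(\sum_{a\in V_i}h^r_{aa})^2-\sum_{(a,b)\,\mathrm{cross}}(h^r_{ab})^2$; for $r\ge2$ we have $\sum_a h^r_{aa}=0$, so this is nonpositive and may be dropped, and the whole term is at most its $r=1$ value $\mu^2-\sum_i t_i^2-\sum_{(a,b)\,\mathrm{cross}}(h^1_{ab})^2$. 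Retaining among the cross terms only the diagonal entries with $a\in W$ and using Cauchy--Schwarz $\sum_{a\in W}(h^1_{aa})^2\ge\tfrac1{d-s}\,t_W^2$, the required bound $\le\frac{d+k-1-s}{d+k-s}\,\mu^2$ follows from the weighted estimate
\[
 \Big(\sum\nolimits_{i=1}^k t_i+t_W\Big)^2\le(d+k-s)\Big(\sum\nolimits_{i=1}^k t_i^2+\tfrac1{d-s}\,t_W^2\Big),
\]
i.e.\ Cauchy--Schwarz with weights $\lambda_i=1$ $(i\le k)$ and $\lambda_{k+1}=\tfrac1{d-s}$, for which $\sum_i\lambda_i^{-1}=k+(d-s)=d+k-s$. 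The subtle point is precisely that the weight $\tfrac1{d-s}$ on the complementary block $W$ is what sharpens the naive factor $k$ to $d+k-s$; when $s=d$ the block $W$ is absent and the estimate reduces to $\mu^2\le k\sum_i t_i^2$, matching the coefficient $\tfrac{k-1}{2k}$ of Theorem~\ref{T-D}.

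Substituting the two bounds into the displayed identity gives, for every admissible family, $\tau({\cal D}_x)-\sum_i\tau(V_i)\le\frac{d+k-1-s}{d+k-s}\|\bar H_{\cal D}\|^2+(d(d-1)-\sum_i n_i(n_i-1))\max\bar K_{\,|\,\bar{\cal D}}$; taking the maximum over $\{V_i\}$ and dividing by $2$ yields \eqref{E-ineq-C}. The case $\bar H_{\cal D}=0$ needs no special frame, since then $\mu=0$ and every $\xi_r$-contribution is nonpositive.
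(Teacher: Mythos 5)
Your argument is correct. Note that the paper gives no actual proof of this statement --- it only remarks that the argument is ``similar to the proof of \cite[Theorem~13.3]{chen1}'' --- so what you have written is exactly the adaptation the author leaves to the reader. All three ingredients check out: the two traced Gauss identities from \eqref{E-Si}; the count $d(d-1)-\sum_i n_i(n_i-1)$ of ordered cross pairs, each of whose planes lies in $\bar{\cal D}$ by adaptedness, for the ambient term; and the normal-frame decomposition with $\xi_1$ parallel to $\bar H_{\cal D}$, which kills the $r\ge2$ contributions and reduces everything to the diagonal entries $h^1_{aa}$. The only place you genuinely depart from Chen's published argument is the final algebraic step: Chen packages it as a combinatorial grouping lemma for real numbers $a_1,\ldots,a_\gamma$ satisfying $(\sum a_i)^2=\gamma(\eta+\sum a_i^2)$ with $\gamma=d+k-s$, treating each block $V_i$ as one aggregated variable and each direction of $W$ separately, whereas you get the same coefficient directly from the weighted Cauchy--Schwarz $(\sum_i t_i+t_W)^2\le(d+k-s)(\sum_i t_i^2+\tfrac1{d-s}t_W^2)$ after retaining only the diagonal cross terms over $W$. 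The two routes are equivalent, but yours is cleaner, makes visible why the weight $1/(d-s)$ on the complementary block improves the naive factor $k$ to $d+k-s$, and degenerates transparently to the coefficient $\tfrac{k-1}{2k}$ of Theorem~\ref{T-D} when $s=d$; your handling of the cases $s=d$ and $\bar H_{\cal D}=0$ is also correct.
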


\begin{proof}
This is similar to the proof of \cite[Theorem~13.3]{chen1}.
\end{proof}

The case of equality in \eqref{E-ineq-C} is similar to \cite[Theorem~13.3: (a), (b)]{chen1}.
Extremal immersions in Euclidean space in terms of $\delta_{\cal D}$-invariants
are the sub-Riemannian analogue of Chen's ``ideal immersions".

\begin{corollary}
A sub-Riemannnian manifold $(M,g; {\cal D})$ with the condition $\delta_{\cal D}(n_1,\ldots,n_k)>0$
for some $(n_1,\ldots,n_k)\in S(d,k)$ does not admit ${\cal D}$-minimal
isometric immersions in a Euclidean~space.
\end{corollary}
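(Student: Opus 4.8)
The plan is to derive the statement as an immediate consequence of the inequality \eqref{E-ineq-C} proved in the preceding theorem, arguing by contradiction. First I would suppose that, on the contrary, the sub-Riemannian manifold $(M,g;{\cal D})$ admits a ${\cal D}$-minimal isometric immersion $f:(M,g;{\cal D})\to\RR^N$ into some Euclidean space. The only preliminary point requiring care is that \eqref{E-ineq-C} was established for \emph{adapted} immersions into an ambient $(\bar M,\bar g,\bar{\cal D})$; to bring the Euclidean target into this framework I would simply equip $\RR^N$ with the ambient distribution $\bar{\cal D}=T\RR^N$ (or any distribution containing $f_*{\cal D}$). Then $f_*({\cal D})\subset\bar{\cal D}$ holds trivially, so $f$ is adapted and the theorem applies to our fixed tuple $(n_1,\ldots,n_k)$; note that $S(d,k)\subset S(d)$ for $k\ge2$, so membership in the domain of the theorem is guaranteed.

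Next I would specialise the two terms on the right-hand side of \eqref{E-ineq-C} to the present situation. Since a Euclidean space is flat, all its sectional curvatures vanish, whence $\max\bar K_{\,|\,\bar{\cal D}}=0$ and the second summand drops out entirely, irrespective of the combinatorial factor $\frac12\big(d(d-1)-\sum_i n_i(n_i-1)\big)$. By hypothesis the immersion is ${\cal D}$-minimal, which by definition means $\bar{H}_{\cal D}=0$, so $\|\,\bar{H}_{\cal D}\,\|^2=0$ and the first summand vanishes as well (the coefficient $\frac{d+k-1-\sum_i n_i}{2(d+k-\sum_i n_i)}$ is finite, since $\sum_i n_i\le d$ forces the denominator $d+k-\sum_i n_i\ge k\ge2>0$, but this is immaterial once the factor it multiplies is zero).

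Substituting these two vanishings into \eqref{E-ineq-C} then yields $\delta_{\cal D}(n_1,\ldots,n_k)\le0$, in direct contradiction with the standing assumption $\delta_{\cal D}(n_1,\ldots,n_k)>0$. Hence no such ${\cal D}$-minimal isometric immersion into a Euclidean space can exist, which is the assertion. I do not expect any genuine obstacle here: the entire content of the corollary is carried by the theorem, and the proof reduces to the two observations that flatness annihilates the curvature term and ${\cal D}$-minimality annihilates the mean-curvature term. The only mild subtlety, already addressed above, is the bookkeeping needed to present a bare Euclidean immersion as an adapted immersion so that \eqref{E-ineq-C} is formally applicable.
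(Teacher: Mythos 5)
Your proposal is correct and follows exactly the route of the paper, whose entire proof is the remark that the corollary ``follows directly from \eqref{E-ineq-C}''; you have simply made explicit the two vanishings (flatness kills the curvature term, $\bar H_{\cal D}=0$ kills the mean-curvature term) that the author leaves implicit. The bookkeeping about adaptedness via $\bar{\cal D}=T\RR^N$ is a reasonable and harmless addition.
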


\begin{proof}
This follows directly from \eqref{E-ineq-C}.
\end{proof}

\end{document}